\newtheorem{thm}{Theorem}
\newtheorem{lem}{Lemma}
\newtheorem{res}{Corollary}
\newtheorem{rem}{Remark}
\newcommand{\ve}{\varepsilon}
\newcommand{\eps}{\varepsilon}
\renewcommand{\P}{\mathsf P}
\newcommand{\E}{\mathsf E}
\newcommand{\R}{\mathbb R}
\renewcommand{\Re}{\mathbb R}
\newcommand{\df}{ d}
\newcommand{\prt}{\partial}
\newcommand{\mc}[1]{\mathcal {#1}}
\newcommand{\pr}{\mathsf P}
\newcommand{\mb}[1]{\mathbb {#1}}
\newcommand{\be}{\begin{equation}}
\newcommand{\ee}{\end{equation}}
\begin{document}

\title[LAN property for discretely observed solutions to SDE's]
{LAN property for discretely observed solutions to
L\'{e}vy driven SDE's}

\author{D. O. Ivanenko }
\address{Kyiv National Taras Shevchenko  University, Volodymyrska, 64, Kyiv, 01033,
             Ukraine} \email{ida@univ.kiev.ua}

\author{A. M. Kulik}
\address{Institute of Mathematics, Ukrai\-ni\-an National Academy of Sciences,
01601 Tereshchenkivska, 3, Kyiv, Ukraine}
\email{kulik@imath.kiev.ua}
\keywords{LAN property, Likelihood function, L\'{e}vy driven SDE, Regular
statistical experiment}

\begin{abstract}
The LAN property is proved in the statistical model based on discrete-time observations of a
solution to a L\'{e}vy driven SDE. The proof is based on a general sufficient condition  for a statistical model based on  a discrete observations of  a Markov process to possess the LAN property, and involves substantially the Malliavin calculus-based integral representations for derivatives of log-likelihood of the model.
\end{abstract}
\maketitle
\section{Introduction}

Consider stochastic equation of the form
\begin{equation}\label{eq1}
    \df X_t^\theta=a_\theta(X_t^\theta)\df t +\df Z_t,
\end{equation}
where $a:\Theta\times\R\to \R$ is a measurable function,
$\Theta=(\theta_1, \theta_2)\in \R$ is a parametric set. For a
given $\theta\in \Theta$, assuming that the drift term $a_\theta$
satisfies the standard local Lipschitz and linear growth
conditions,  Eq. (\ref{eq1}) uniquely defines a Markov process
$X$. The aim of this paper is to establish the \emph{local asymptotic normality} property (LAN in the sequel) in a model, where the process $X$ is discretely observed with a fixed time discretization value $h>0$, and a number of observation $n\to \infty$.

The LAN property provides a convenient and powerful tool for establishing lower efficiency bounds in a statistical model, e.g.  \cite{intro1}, \cite{intro2}, \cite{intro3}. Such a property for statistical models, based on discrete observations of processes with L\'evy noise, was studied mostly in the cases, where the likelihood function (or, at least its ``main part'') is explicit, in a sense, e.g. \cite{AJ07}, \cite{AJ81}, \cite{Hopf97}, \cite{KM13}, \cite{KNT}. In the above references the models are linear in the sense that the process under the observation is either a L\'evy process, or a solution of a linear (Ornstein-Uhlenbeck type) SDE driven by a L\'evy process. The general non-linear case remains non-studied to a great extent, and apparently the main reason for this is that the transition probability density of the observed Markov process in that case is highly implicit. In this paper we develop tools, convenient for proving the LAN property in the framework of discretely observed solutions to SDE's with a L\'evy noise. To make the exposition reasonably transparent, we confine ourselves to a particular case of one-dimensional and one-parameter model, and a fixed sample frequency $h$. Various extensions (general state space, multiparameter model, high frequency sampling, etc.) are visible, but we postpone their detailed analysis for a further research.

Our approach consists of two principal parts. On one hand, we design a general sufficient condition for a statistical model based on  a discrete observations of  a Markov process to possess the LAN property, see Theorem \ref{mainthm1} below. This result extends the classical LeCam's result about the LAN property for i.i.d. samples, and it close  \cite[Theorem 13]{Green}, with some substantial differences in the basic assumptions, which makes our result well designed to a study of a model based on observations of a L\'evy driven SDE, see Remark \ref{r1} below. On the other hand, we integral representations of derivatives of 1st and 2nd orders of the log-likelihood are available: our recent papers \cite{MLE} and \cite{SDer} we have derived such representations using the Malliavin calculus tools. The virtue of this approach is the same with the one developed in \cite{Gobe} in the diffusion setting, but with substantial changes which comes from non-diffusive structure of the noise. Combination of these two principal parts leads to a required LAN property.

The structure of the paper follows the two-stage scheme outlined above.  First we  formulate in Section \ref{s21} (and prove in Section \ref{s3}) a general sufficient condition for the LAN property in a  Markov model. Then we  formulate in Section \ref{s22} (and prove in Section \ref{s32})  our main result about the LAN property for a discretely observed solution to a L\'evy driven SDE; here the proof involves substantially the Malliavin calculus-based integral representations of derivatives of the log-likelihood from \cite{MLE} and \cite{SDer}.

\section{The main results}

\subsection{LAN property for discretely observed Markov processes}\label{s21} Let
 $X$ be a Markov process taking its values in a locally
compact metric space $\mb{X}$. The law of $X$ is assumed to depend on a real-valued parameter $\theta$; in what follows, we assume that the parametric set $\Theta$ is an interval $(\theta_1, \theta_2)\in \R$. We denote by $\P_x^\theta$ the law of $X$ with $X_0=x$, which corresponds to the parameter value $\theta$; the expectation w.r.t. $\P_x^\theta$ is denoted by $\E_x^\theta$.  For a given $h>0$, we denote by $\P_{x,n}^\theta$ the law w.r.t. $\P_{x}^\theta$ of the vector $X^n=\left\{X_{hk}, k=1,\dots ,n\right\}$ of discrete  time
observations  of $X$ with the step $h$.  Denote by $\mc{E}_n$  the statistical experiment generated by the sample  $X^n$ with $X_0=x,$ i.e.
\be\label{stat_exp} \mc{E}_n=\Big(\mb{X}^n, \mathcal{B}(\mb{X}^n),
\P_{x,n}^\theta, \theta\in \Theta\Big); \ee we refer to \cite{IKh}
for the notation and terminology. Our aim is to establish the LAN property for the sequence of experiments $\{\mc{E}_n\}$.

Recall that the sequence of statistical experiments $\{\mc{E}_n\}$ (or, equivalently,  the family $\{\P_{x,n}^\theta, \theta\in\Theta\}$) is said to have \emph{the
LAN property} at the point $\theta_0\in\Theta$ as $n\rightarrow\infty$, if for
some sequence $r(n)>0,n\geq 1$ and all $
u\in\R$

$$ Z_{n,\theta_0}(u):=
\frac{\df\P_{x,n}^{\theta_0+ r(n)u}}{\df\P_{x,n}^{
\theta_0}}(X^n)=
\exp\left\{\Delta_n(\theta_0)u-\frac{1}{2}u^2+\Psi_n(u,\theta_0)\right\},
$$
with \be\label{2} \mc{L}\left(\Delta_n(\theta_0)\ |\ \P_{x,n}^{
\theta_0}\right)\Rightarrow N(0,1), \quad
 n\rightarrow\infty; \ee
\be\label{3} \Psi_n(u,\theta_0)\stackrel{\P_{x,n}^{
\theta_0}}{\longrightarrow}0,\quad n\rightarrow\infty. \ee

 In what follows we assume that $X$ admits a transition probability density $p_h(\theta;x,y)$
w.r.t. some $\sigma$-finite measure $\lambda$. Furthermore, we assume that the experiment
$\mc{E}_1$ is \emph{regular}; that is, for every $x\in \mb{X}$

\begin{itemize} \item[(a)]  the function
$\theta\mapsto p_h(\theta;x,y)$ is continuous  for
$\lambda$-a.a. $y\in \mb{X}$;

\item[(b)] the function  $\sqrt{p_h(\theta;x,\cdot)}$ is
differentiable in $L_2(\mb{X}, \lambda)$; that is, there exists
$q_h(\theta;x,\cdot)\in L_2(\mb{X}, \lambda)$ such that
$$\int_{\mb{X}}\left({\sqrt{p_h(\theta+\delta; x, y)}-\sqrt{p_h(\theta; x,y)}\over
\delta}-q_h(\theta;x,y)\right)^2\lambda(d y)\to 0, \quad \delta\to
0;
$$

\item[(c)] the function $q_h(\theta;x,\cdot)$ is continuous in
$L_2(\mb{X}, \lambda)$ w.r.t. $\theta$; that is,
$$
\int_{\mb{X}}\Big(q_h(\theta+\delta; x, y)-q_h(\theta;
x, y)\Big)^2\lambda(d y)\to 0, \quad \delta\to 0.
$$
\end{itemize}

Denote
\be\label{d_hheta_rep}
g_h(\theta, x,y)=2q_h(\theta;x,y)\sqrt{p_h(\theta;x,y)};
\ee
note that by the definition of $q_h$ the function $g_h$ is well defined and satisfies
\be\label{g_mart}
\E_x^\theta g_h(\theta;x, X_h)=0 \ee for every $x\in \Re,
\theta\in \Theta$. Furthermore, denote
\be\label{fischinf}
I_n(\theta)=\sum_{k=1}^n\E_x^\theta \Big(g_h(\theta;X_{h(k-1)},
X_{hk})\Big)^2=4\E_x^\theta\sum_{k=1}^n\int_{\mb{X}}
\left(q_h(\theta;X_{h(k-1)},y)\right)^2\lambda(\df
y). \ee
Assuming that the statistical experiment $\mc{E}_n$ is regular, the above integral is finite and defines the \emph{Fisher information} for $\mc{E}_n$.

We fix $\theta_0\in \Theta$,  and put $ r(n)=I_n^{-1/2}(\theta_0)$ for $n$ large enough, assuming that for those $n$ one has $I_n(\theta_0)>0$.

\begin{thm}\label{mainthm1} Suppose the following.
\begin{itemize}
\item[1.] Statistical experiment \eqref{stat_exp} is regular for
every $x\in\mb{X}$ and $n\ge1$; for $n$ large enough $I_n(\theta_0)>0$.

\item[2.] The sequence $$r(n)
\sum_{j=1}^ng_h\left(\theta_0;X_{h(j-1)},X_{hj}\right), \quad n\geq 1$$
is asymptotically normal w.r.t. $P_x^{\theta_0}$ with parameters $(0,1)$.

\item[3.] The sequence $$
r^2(n)
\sum_{j=1}^ng_h^2(\theta_0;X_{h(j-1)},X_{hj}), \quad n\geq 1
$$
converges to 1 in $P_x^{\theta_0}$-probability.

\item[4.] There exists a constant $p>2$ such that
\be\label{loc2th1} \lim_{n\rightarrow\infty} r^p(n)\E_x^{\theta_0}
\sum_{j=1}^n\left|g_h(\theta_0;X_{h(j-1)},X_{hj})\right|^p =0. \ee

\item[5.] For every $N>0$ \be\label{loc1th1}
\lim_{n\rightarrow\infty}\sup_{|v|<N}  r^2(n)\E_x^{\theta_0}
\sum_{j=1}^n\int_{\mb{X}}\left(q_h\left(\theta_0+
r(n)v;X_{h(j-1)},y\right)- q_h(\theta_0;X_{h(j-1)},y)\right)^2
\lambda(\df y)=0.\ee

\end{itemize}
Then $\{\P_{x,n}^\theta, \theta\in\Theta\}$ has the LAN property at the point $\theta_0$.
\end{thm}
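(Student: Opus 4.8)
The plan is to verify directly the defining relation of the LAN property by expanding the log-likelihood ratio into a sum of one-step increments and then applying a martingale central limit theorem. Write
\[
\log Z_{n,\theta_0}(u)=\sum_{k=1}^n\log\frac{p_h(\theta_0+r(n)u;X_{h(k-1)},X_{hk})}{p_h(\theta_0;X_{h(k-1)},X_{hk})}
=2\sum_{k=1}^n\log\left(1+\xi_{n,k}(u)\right),
\]
where
\[
\xi_{n,k}(u)=\frac{\sqrt{p_h(\theta_0+r(n)u;X_{h(k-1)},X_{hk})}-\sqrt{p_h(\theta_0;X_{h(k-1)},X_{hk})}}{\sqrt{p_h(\theta_0;X_{h(k-1)},X_{hk})}}.
\]
By the $L_2$-differentiability assumption (b) of regularity, for each step the numerator is, up to an $L_2(\lambda)$-small remainder, equal to $r(n)u\,q_h(\theta_0;X_{h(k-1)},\cdot)$. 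The first step is therefore to show that the conditional (given $X_{h(k-1)}$) second moments of the remainders, summed over $k$ and multiplied by $r^2(n)$, tend to zero; here assumption 5 is exactly what is needed, because the Hellinger-type remainder at parameter increment $r(n)v$ is controlled by $\int(q_h(\theta_0+r(n)v;X_{h(k-1)},y)-q_h(\theta_0;X_{h(k-1)},y))^2\lambda(dy)$ together with the $L_2$-differentiability (a standard computation that turns a difference-quotient-remainder estimate into the stated integrated modulus of continuity via (b) and (c)).

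The second step is the Taylor expansion $2\log(1+x)=2x-x^2+O(|x|^3)$ applied termwise. The linear part $2\sum_k\xi_{n,k}(u)$ must be matched to $u\,\Delta_n(\theta_0)+($negligible$)$ where, by the identity $g_h=2q_h\sqrt{p_h}$, one has $\Delta_n(\theta_0)=r(n)\sum_k g_h(\theta_0;X_{h(k-1)},X_{hk})$. Concretely, $2\xi_{n,k}(u)=r(n)u\,g_h(\theta_0;X_{h(k-1)},X_{hk})/p_h(\theta_0;X_{h(k-1)},X_{hk})^{1/2}\cdot\sqrt{p_h}\,/\,\sqrt{p_h}$ — more cleanly, $2\xi_{n,k}(u)=u r(n) g_h(\theta_0;X_{h(k-1)},X_{hk})+\rho_{n,k}$ where the remainder $\rho_{n,k}$ has conditional second moment handled by step one. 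The quadratic part $-\sum_k\xi_{n,k}^2(u)$ should converge to $-\tfrac12 u^2$: its conditional expectation is $-\tfrac14 u^2 r^2(n)\sum_k\E_x^{\theta_0}(g_h^2\mid\F_{h(k-1)})$ plus a remainder, and this converges to $-\tfrac12u^2$ by assumption 3 (note the factor: $\E g_h^2 = 4\E q_h^2$, and $\xi^2$ contributes $r^2u^2q_h^2 = \tfrac14 r^2 u^2 g_h^2/p_h$ before integrating against $p_h\lambda$, so integrating gives $\tfrac14 r^2 u^2\E g_h^2$, summing gives $\to\tfrac14 u^2\cdot 2 = \tfrac12 u^2$ once assumption 3 is used — the bookkeeping of this constant is routine but must be done carefully). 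The cubic and higher remainder $\sum_k O(|\xi_{n,k}|^3)$ is killed by assumption 4 with $p>2$: crudely bound $|\xi_{n,k}|^p$ by a constant times $r^p(n)|g_h|^p$ plus the controlled remainder, on the event where all $\xi_{n,k}$ are small, which has probability tending to one by a maximal-increment estimate again following from assumptions 3 and 4.

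The third step assembles these pieces: $\Psi_n(u,\theta_0)=\log Z_{n,\theta_0}(u)-u\Delta_n(\theta_0)+\tfrac12u^2$ is, by the above, a sum of terms each converging to zero in $\P_{x,n}^{\theta_0}$-probability, giving \eqref{3}; and \eqref{2} is precisely assumption 2. One subtlety to address along the way is that the expansion is only valid on the event $A_n$ where $\max_k|\xi_{n,k}(u)|<1/2$, say; outside $A_n$ one uses that $\P_{x,n}^{\theta_0}(A_n^c)\to0$, which follows from $\E_x^{\theta_0}\sum_k\xi_{n,k}^2(u)\to \tfrac12 u^2<\infty$ (bounded) combined with the $p$-th moment control of assumption 4 to upgrade an $L_2$-tightness of the increments to a uniform-smallness statement via Markov's inequality on $\max_k|\xi_{n,k}|\le(\sum_k|\xi_{n,k}|^p)^{1/p}$.

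The main obstacle I expect is the passage from the abstract $L_2$-differentiability in assumption (b) to a genuine \emph{pointwise-in-}$k$ (i.e. conditional given $X_{h(k-1)}$) quadratic-remainder estimate that is summable after multiplication by $r^2(n)$ — in other words, showing that the Hellinger-distance remainder $\sum_k \E_x^{\theta_0}\big[\int(\sqrt{p_h(\theta_0+r(n)u;X_{h(k-1)},y)}-\sqrt{p_h(\theta_0;X_{h(k-1)},y)}-r(n)u\,q_h(\theta_0;X_{h(k-1)},y))^2\lambda(dy)\big]$ is $o(r^2(n))^{-1}$-negligible, i.e. $o(1)$ after the $r^2(n)$ scaling. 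This requires interpolating: the difference quotient converges in $L_2(\lambda)$ by (b), but we need a rate, which one obtains by writing the remainder as $\int_0^1 (q_h(\theta_0+sr(n)u;\cdot)-q_h(\theta_0;\cdot))\,ds$ times $r(n)u$ and invoking assumption 5 (the uniform-in-$v$ modulus of continuity of $q_h$ in $L_2(\lambda)$, integrated against the law of $X_{h(k-1)}$) together with Jensen; the combination of (b), (c) and assumption 5 is tailored precisely so that this step goes through. Once this estimate is in hand, everything else is a careful but standard martingale-CLT-style bookkeeping argument.
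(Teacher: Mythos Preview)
Your overall architecture---define the Hellinger increments $\xi_{n,k}(u)$, expand $\log(1+\xi)$, and control linear, quadratic, and cubic pieces separately---matches the paper's proof exactly. However, there is a genuine gap in your treatment of the linear term that makes the argument break down as written.

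You claim that $2\sum_k\xi_{n,k}(u)=ur(n)\sum_k g_h(\theta_0;X_{h(k-1)},X_{hk})+\sum_k\rho_{n,k}$ with $\sum_k\rho_{n,k}$ negligible because ``the remainder $\rho_{n,k}$ has conditional second moment handled by step one.'' It is true that $\sum_k\E_x^{\theta_0}\rho_{n,k}^2\to 0$ (this is what assumption~5 buys), but this does \emph{not} force $\sum_k\rho_{n,k}\to 0$: the $\rho_{n,k}$ are not martingale differences. From the identity $(1+\xi_{n,k})^2=p_h(\theta_0+r(n)u;\cdot)/p_h(\theta_0;\cdot)$ one gets $\E_x^{\theta_0}[\xi_{n,k}\mid\F_{h(k-1)}]=-\tfrac12\E_x^{\theta_0}[\xi_{n,k}^2\mid\F_{h(k-1)}]$, so each $\xi_{n,k}$ (hence each $\rho_{n,k}$, since $g_h$ is centered) carries a nonzero conditional mean. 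Summing these biases gives $-\tfrac12\sum_k\E_x^{\theta_0}[\xi_{n,k}^2\mid\F_{h(k-1)}]$, which converges to $-u^2/8$, not to zero; doubled, this is exactly the $-u^2/4$ contribution you are missing. The paper handles this by first centering $\rho_{n,k}$ to get a genuine martingale difference (whose summed variance does vanish by your step one), and then computing the predictable compensator via the identity above.

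This also explains why your quadratic bookkeeping is off. One has $\sum_k\xi_{n,k}^2\to u^2/4$ (not $u^2/2$; your extra factor of~2 has no source), so $-\sum_k\xi_{n,k}^2\to -u^2/4$. The correct $-u^2/2$ in the LAN expansion is assembled as $-u^2/4$ from the quadratic term \emph{plus} $-u^2/4$ from the bias in the linear term. A further subtlety you glide over: assumption~3 gives $r^2(n)\sum_k g_h^2\to 1$, but to evaluate the bias you need the \emph{predictable} version $\sum_k\E_x^{\theta_0}[\xi_{n,k}^2\mid\F_{h(k-1)}]\to u^2/4$. Passing from the former to the latter is not automatic; the paper proves it by a truncation-plus-uniform-integrability argument (its Lemma~4), and this step is where the $L_1$ upgrade of the quadratic convergence really matters.
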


\begin{rem}\label{r1}
The above theorem is closely related to \cite[Theorem 13]{Green}. One important difference is that in \cite{Green} the main conditions are formulated in the terms of the functions
$$\sqrt{p_h(\theta+t;x,y)/p_h(\theta;x,y)}-1,
$$ while within our approach the main assumptions are imposed on the log-likelihood derivative $g_h(\theta;x,y)$, and can be verified efficiently e.g. in a model where $X$ is defined by an SDE with jumps; see Section \ref{s22} below. Another important difference is that the whole approach in \cite{Green} is developed under the assumption that the log-likelihood function smoothly depends on the parameter $\theta$. For a model where $X$ is defined by an SDE with jumps, such an assumption may be very restrictive, see the detailed discussion in \cite{MLE}. This is the reason why we use instead the assumption of regularity of the experiments, which both is much milder and is easily verifiable, see \cite{MLE}.
\end{rem}

Let us note briefly two possible extensions of the above result, which can be obtained without any essential changes in the proof. We do not expose them here in details, because they will not be used in the current paper.

\begin{rem}  The statement of Theorem \ref{mainthm1} still holds true if $h$ is allowed to
depend on $n$, with conditions 1 -- 5 respectively changed.
\end{rem}

\begin{rem} The statement of Theorem \ref{mainthm1} still holds true if, instead of one $\theta_0$, a sequence $\theta_n\to \theta_0$ is considered, with conditions 2 -- 5 respectively changed. Moreover, in that case relation (\ref{2}) and (\ref{3}) would still hold true if instead of a fixed $u$ a sequence $u_n\to u$ is considered. That is, under the uniform version of conditions 2 -- 5 the
\emph{uniform asymptotic normality} would hold true; see
 \cite[Definition 2.2]{IKh}.
\end{rem}

\subsection{LAN property for families of distributions of solutions to L\'{e}vy driven SDE's}\label{s22}
We assume that $Z$ in the SDE (\ref{eq1}) is  a L\'evy
process without a diffusion component; that is,
$$
Z_t=ct+\int_0^t\int_{|u|>1}u\nu(\df s, \df
u)+\int_0^t\int_{|u|\leq 1}u\tilde\nu(\df s, \df u),
$$
where $\nu$ is a Poisson point measure with the intensity measure
$\df s\mu(\df u)$, and  $\tilde \nu (\df s, \df u)=\nu(\df s, \df
u)-\df s\mu(\df u)$ is respective compensated Poisson measure. In
the sequel, we assume the L\'evy measure $\mu$ to satisfy the
following.

\textbf{H.} (i) for some $\beta>0$,
$$
\int_{|u|\geq 1}u^{4+\beta}\mu(du)<\infty;
$$

(ii) for some $u_0>0$, the restriction of $\mu$ on $[-u_0, u_0]$
has a positive density $m\in C^2\left(\left[-u_0,0\right)\cup
\left(0, u_0\right]\right)$;

(iii) there exists $C_0$ such that
$$
|m'(u)|\leq C_0|u|^{-1}m(u),\quad |m''(u)|\leq C_0u^{-2}m(u),\quad
|u|\in (0, u_0];
$$

(iv)
$$
\left(\log \frac{1}{\eps}\right)^{-1}\mu\Big(\{u:|u|\geq
\eps\}\Big)\to \infty,\quad \eps\to 0.
$$
One particularly important class of  L\'evy processes satisfying
\textbf{H}  consists of \emph{tempered $\alpha$-stable processes}
(see \cite{Ros1}), which arise naturally in models of turbulence
\cite{novikov}, economical models of stochastic volatility
\cite{carr}, etc.

Denote by $C^{k,m}(\Re\times\Theta), k,m\geq 0$  the class of
functions $f:\Re\times\Theta\to \Re$ which has continuous
derivatives
$$
\frac{\prt^i}{\prt x^i}\frac{\prt^j}{\prt\ \theta^j}f, \quad i\leq
k, \quad j\leq m.
$$

About the  coefficient $a_\theta(x)$ in Eq. (\ref{eq1}) we assume the following.

\textbf{A.} (i) $a\in C^{3,2} (\Re\times\Theta)$ have bounded derivatives
$\prt_xa$, $\prt^2_{xx}a$, $\prt^2_{x\theta}a$, $\prt^3_{xxx}a$,
$\prt^3_{x\theta\theta}a$, $\prt^3_{xx\theta}a$,
$\prt^4_{xxx\theta}a$ and
 \be\label{lin_gr1}
|a_\theta(x)|+|\partial_{\theta}
a_\theta(x)|+|\partial^2_{\theta\theta} a_\theta(x)|\leq
    C(1+|x|), \quad \theta\in \Theta, \quad x\in \Re.
\ee

(ii) For a given $\theta_0\in \Theta$, there exists a neighbourhood $(\theta_-, \theta_+)\subset \Theta$ of $\theta_0$ such that
$$
\limsup_{|x|\rightarrow\infty}\frac{a_{\theta}(x)}{x}<0 \quad \hbox{uniformly by } \theta\in (\theta_-, \theta_+).
$$

It is  proved in \cite{MLE}  that, under conditions \textbf{A}(i) and \textbf{H}, the following properties hold:
\begin{itemize}
  \item the Markov process $X$ given by \eqref{eq1} has a
transition probability density $p_t^\theta$ w.r.t. the Lebesgue
measure;
  \item this density has a
derivative $\prt_\theta p_t^\theta(x,y),$ and the statistical
experiment (\ref{stat_exp}) is regular;
  \item the function
$g_t^\theta$, given by \eqref{d_hheta_rep} satisfies
\eqref{g_mart}.
\end{itemize}
Hence all the pre-requisites for Theorem \ref{mainthm1}, given in Section \ref{s21}, are available with  $\lambda(dx)=dx$ (the Lebesgue measure).

Furthermore, under conditions \textbf{A} and \textbf{H}, for $\theta=\theta_0$ corresponding Markov process $X$ is ergodic, i.e. there exists unique invariant probability measure $\varkappa_{inv}^{\theta_0}$ for $X$. One can verify this easily, using  conditions, sufficient for  ergodicity of solutions to L\'evy driven SDE's, given in  \cite{masuda} and \cite{ergod}.
Denote by $\{X^{st, \theta_0}_t, t\in \Re\}$ corresponding stationary version of $X$; that is, a Markov process, defined on whole axis $\Re$, which has the  transition probabilities with $X$ and one-dimensional distributions equal to $\varkappa_{inv}^{\theta_0}$. Clearly, the existence of such a process, on a proper probability space, is guaranteed  by the Kolmogorov consistency theorem. Denote
\be\label{sigma}
\sigma^2(\theta_0)=\E \Big(g_h(\theta_0;X_0^{st, \theta_0},X_h^{st, \theta_0})\Big)^2=\int_{\Re}\int_{\Re}\Big(g_h(\theta_0;x,y)\Big)^2p_h(\theta_0;x,y)\, dy\varkappa_{inv}^{\theta_0}(dx).
\ee

The following theorem performs  the main result of this paper. Its proof is given  in Section \ref{s32} below.

\begin{thm}\label{mainthm2} Let conditions \textbf{A} and \textbf{H} hold true, and
$$
\sigma^2(\theta_0)>0.
$$
Then the family $\{\P_{x,n}^\theta, \theta\in\Theta\}$ possesses  the LAN
property at the point $\theta=\theta_0$.
\end{thm}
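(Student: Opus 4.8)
The plan is to verify, one by one, the five hypotheses of Theorem~\ref{mainthm1} for the model generated by the SDE~\eqref{eq1}, taking $\lambda(dx)=dx$. Condition~1 is already granted: as recorded in the excerpt, the results of \cite{MLE} give, under \textbf{A}(i) and \textbf{H}, a transition density $p_h^\theta$, its $\theta$-derivative, regularity of $\mc E_1$ (hence of $\mc E_n$ by the Markov/product structure), and the martingale-type identity \eqref{g_mart} for $g_h$. The assumption $\sigma^2(\theta_0)>0$ together with ergodicity will be used to guarantee $I_n(\theta_0)>0$ for large $n$ and to identify the normalizing rate $r(n)$: by the ergodic theorem for the stationary version $X^{st,\theta_0}$, one expects
$$
\frac{1}{n} I_n(\theta_0)=\frac1n\sum_{k=1}^n\E_x^{\theta_0}\big(g_h(\theta_0;X_{h(k-1)},X_{hk})\big)^2\longrightarrow \sigma^2(\theta_0),
$$
so that $r(n)\sim (n\sigma^2(\theta_0))^{-1/2}$; this reduces all remaining conditions to law-of-large-numbers and CLT statements with the clean normalization $n^{-1}$ or $n^{-1/2}$.

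Conditions~2 and~3 are then a martingale central limit theorem and a weak law of large numbers for the array $g_h(\theta_0;X_{h(j-1)},X_{hj})$. The key structural point is that, by \eqref{g_mart}, $M_j:=g_h(\theta_0;X_{h(j-1)},X_{hj})$ is a martingale-difference sequence with respect to the discrete filtration $\mc F_j=\sigma(X_{hk},k\le j)$ under $\P_x^{\theta_0}$. For the CLT I would invoke the standard martingale CLT (e.g.\ Hall--Heyde): the predictable quadratic variation $r^2(n)\sum_{j=1}^n\E^{\theta_0}[M_j^2\mid\mc F_{j-1}]$ converges to $1$ in probability by the Markov ergodic theorem applied to the function $x\mapsto \int (g_h(\theta_0;x,y))^2 p_h(\theta_0;x,y)\,dy$, and the conditional Lindeberg condition follows from the $L^p$-bound supplied by Condition~4 ($p>2$) via a Markov-inequality truncation argument. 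Condition~3 follows similarly, or directly from the ergodic theorem, once one shows $\E\big(g_h(\theta_0;X_0^{st,\theta_0},X_h^{st,\theta_0})\big)^2<\infty$, i.e.\ $\sigma^2(\theta_0)<\infty$.

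Conditions~4 and~5, together with the finiteness of $\sigma^2(\theta_0)$, are where the real work lies, and this is the step I expect to be the main obstacle. They require quantitative moment bounds, uniform in $x$ over compacts and summable after multiplication by $r^p(n)\asymp n^{-p/2}$ (resp.\ $r^2(n)\asymp n^{-1}$), on
$$
\E_x^{\theta_0}\big|g_h(\theta_0;X_{h(j-1)},X_{hj})\big|^p\quad\text{and}\quad \E_x^{\theta_0}\int_{\Re}\big(q_h(\theta_0+r(n)v;X_{h(j-1)},y)-q_h(\theta_0;X_{h(j-1)},y)\big)^2dy.
$$
Here is where the Malliavin-calculus integral representations for the first and second $\theta$-derivatives of the log-likelihood from \cite{MLE} and \cite{SDer} enter decisively: they express $g_h(\theta;x,y)=\partial_\theta\log p_h^\theta(x,y)$, and its $\theta$-derivative, as conditional expectations of explicit (Skorokhod integral / Malliavin weight) functionals of the driving noise, which can be bounded in $L^p$ uniformly in the starting point using \textbf{H}(i)--(iii) (moment and density-regularity conditions on $\mu$) and the boundedness of the derivatives of $a_\theta$ from \textbf{A}(i). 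For Condition~4 one gets a bound of the form $\E_x^{\theta_0}|g_h|^p\le C(1+|x|)^\gamma$; since under \textbf{A}(ii) the process has uniformly-in-$n$ bounded moments $\sup_j\E_x^{\theta_0}(1+|X_{hj}|)^\gamma<\infty$ (a Lyapunov/drift argument), the sum in \eqref{loc2th1} is $O(n)$, and $r^p(n)\cdot O(n)=O(n^{1-p/2})\to0$. For Condition~5 one writes the $L_2$-increment of $q_h$ in terms of an integral of $\partial_\theta g_h$ (the second-order representation of \cite{SDer}), obtaining a bound $\le C r^2(n)|v|^2\cdot(1+|x|)^{\gamma'}$ after a mean-value argument in $\theta$ and continuity estimates, so the whole sum is $O(r^2(n)\cdot N^2\cdot n)=O(N^2)\cdot r^2(n)\cdot n\cdot r^2(n)=O(N^2/n)\to0$ uniformly in $|v|<N$. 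Assembling these five verifications and invoking Theorem~\ref{mainthm1} yields the LAN property at $\theta_0$, which is the assertion of Theorem~\ref{mainthm2}.
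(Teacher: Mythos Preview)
Your proposal is correct and follows the same overall architecture as the paper---verify Conditions~1--5 of Theorem~\ref{mainthm1} using the moment bounds from \cite{MLE}, \cite{SDer}---with the same estimates for Conditions~4 and~5 (in particular the bound $C N^2 n\,r^4(n)\to0$ for~\eqref{loc1th1} is exactly what the paper obtains). The one genuine methodological difference is in Condition~2: you invoke a martingale CLT (Hall--Heyde type) for the array $M_j=g_h(\theta_0;X_{h(j-1)},X_{hj})$, using~\eqref{g_mart} to get the martingale-difference structure and reducing the predictable bracket convergence to an ergodic theorem for the functional $F(x)=\int g_h^2(\theta_0;x,y)p_h(\theta_0;x,y)\,dy$. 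The paper instead proves the CLT for the \emph{stationary} version $X^{st,\theta_0}$ via the $\alpha$-mixing CLT of Ibragimov--Linnik, then transfers to the process started at $x$ by an explicit exponential coupling from \cite{Kulik}; a small extra step is then needed to identify the limiting variance $\widetilde\sigma^2(\theta_0)$ (which a priori includes all lag covariances) with $\sigma^2(\theta_0)$, and this is where~\eqref{g_mart} is used. Your route is arguably more direct, since the martingale structure gives the correct variance at once and the Lindeberg condition follows from the same $L^p$ bound used for Condition~4. One caveat: the ``Markov ergodic theorem'' you invoke for $\frac1n\sum_j F(X_{h(j-1)})$ with the chain started at $x$ still needs a justification (Harris recurrence, or the same coupling the paper uses) and is also what underlies the deterministic convergence $I_n(\theta_0)/n\to\sigma^2(\theta_0)$; this is standard under the exponential ergodicity available here, but should be stated.
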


\section{Proof of Theorem \ref{mainthm1}}\label{s3}

The method of proof goes back to LeCam's proof of the LAN property for i.i.d. samples, see e.g. Theorem II.1.1 and Theorem II.3.1 in
\cite{IKh}. In the Markov setting, the dependence in the observations lead to some additional technicalities; see e.g. (\ref{ll}). The possible ways to overcome these additional difficulties can be found, in a slightly different setting, in the proof of \cite[Theorem 13]{Green}. In order  to keep the exposition transparent and self-sufficient, we prefer to give a complete proof of Theorem \ref{mainthm1} explicitly, rather than to give a chain of partly relevant references.

We separate the proof into several lemmas; in all the lemmas in this section we assume the conditions of  Theorem \ref{mainthm1} to
be fulfilled. Values $x,\theta_0,$ and $u$ are fixed; we assume that $n$ is large enough, so that $\theta_0+r(n)u\in \Theta$. In order to simplify the notation below we write $\theta$ instead of $\theta_0$.

Denote $$\zeta^\theta_{jn}(u)=\left(\left( \frac{p_h(\theta+
r(n)u;X_{h(j-1)},X_{hj})}{p_h(\theta;X_{h(j-1)},X_{hj})}\right)^{1/2}-1\right)
I\left\{p_h(\theta;X_{h(j-1)},X_{hj})\neq0\right\}.$$

\begin{lem}\label{eta_jn} One has
\be\label{zbch_eta}
\limsup_{n\rightarrow\infty}\sum_{j=1}^{n}\E_x^{\theta}(\zeta^\theta_{jn}(u))^2\leq\frac{1}{4}u^2
\ee and \be\label{zbch_eta2}
\lim_{n\rightarrow\infty}\sum_{j=1}^{n}\E_x^{\theta}\left(\zeta^\theta_{jn}(u)-
\frac{1}{2} r(n)ug_h(\theta;X_{h(j-1)},X_{hj})\right)^2=0. \ee
\end{lem}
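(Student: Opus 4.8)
The plan is to control the quantities $\zeta_{jn}^\theta(u)$ via the $L_2$-differentiability of $\sqrt{p_h}$ postulated in the regularity assumption (b), together with the uniform continuity of the derivative $q_h$ in assumption (c) (used in the form of condition 5), and then identify the limit using the explicit formula $g_h=2q_h\sqrt{p_h}$. The first step is a pointwise/bookkeeping one: on the event $\{p_h(\theta;X_{h(j-1)},X_{hj})\ne 0\}$ we may write, conditionally on $X_{h(j-1)}$,
\[
\E_x^\theta\big[(\zeta_{jn}^\theta(u))^2\,\big|\,X_{h(j-1)}\big]
=\int_{\mb X}\Big(\sqrt{p_h(\theta+r(n)u;X_{h(j-1)},y)}-\sqrt{p_h(\theta;X_{h(j-1)},y)}\Big)^2\lambda(\df y),
\]
and similarly
\[
\E_x^\theta\Big[\big(\zeta_{jn}^\theta(u)-\tfrac12 r(n)u\,g_h(\theta;X_{h(j-1)},X_{hj})\big)^2\,\big|\,X_{h(j-1)}\Big]
=\int_{\mb X}\Big(\sqrt{p_h(\theta+r(n)u;X_{h(j-1)},y)}-\sqrt{p_h(\theta;X_{h(j-1)},y)}-r(n)u\,q_h(\theta;X_{h(j-1)},y)\Big)^2\lambda(\df y),
\]
using $g_h(\theta;x,y)=2q_h(\theta;x,y)\sqrt{p_h(\theta;x,y)}$ and the convention that the indicator kills the set where $p_h(\theta;x,\cdot)=0$ (on which $\sqrt{p_h(\theta;x,\cdot)}=0$ anyway). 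So everything reduces to estimating, for $\delta=r(n)u$, the integrated squared increment of $\sqrt{p_h}$ and its first-order remainder.

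For \eqref{zbch_eta2}, the idea is to integrate the fundamental theorem of calculus along $\theta\mapsto\sqrt{p_h(\theta;x,y)}$: writing the increment as $\int_0^{\delta}q_h(\theta+s;x,y)\,\df s$ (this is exactly the $L_2$-derivative, so holds after passing from difference quotients to the integral via assumption (b), with $q_h$ its $L_2$-derivative — one must be slightly careful and use (b)+(c) to justify that the $L_2$-valued map $\theta\mapsto\sqrt{p_h(\theta;x,\cdot)}$ is $C^1$ with derivative $q_h(\theta;x,\cdot)$, hence absolutely continuous as an $L_2$-valued function), the remainder becomes $\int_0^{\delta}\big(q_h(\theta+s;x,y)-q_h(\theta;x,y)\big)\,\df s$. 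By Cauchy–Schwarz in $s$ and then Fubini,
\[
\int_{\mb X}\Big(\int_0^{\delta}\big(q_h(\theta+s;x,y)-q_h(\theta;x,y)\big)\df s\Big)^2\lambda(\df y)
\le |\delta|\int_0^{\delta}\int_{\mb X}\big(q_h(\theta+s;x,y)-q_h(\theta;x,y)\big)^2\lambda(\df y)\,\df s.
\]
Summing over $j$, taking $\E_x^\theta$ and using $\delta=r(n)u$, the right-hand side is bounded by $u^2\sup_{|v|<|u|}r^2(n)\E_x^\theta\sum_{j=1}^n\int_{\mb X}(q_h(\theta_0+r(n)v;X_{h(j-1)},y)-q_h(\theta_0;X_{h(j-1)},y))^2\lambda(\df y)$, which tends to $0$ by condition 5 (take $N=|u|$). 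This gives \eqref{zbch_eta2}.

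Then \eqref{zbch_eta} follows from \eqref{zbch_eta2} by the triangle inequality in the space $L_2$ of the array $\{(j,n)\}$ under $\sum_j\E_x^\theta$: from \eqref{zbch_eta2} the $\sum_j\E_x^\theta(\zeta_{jn}^\theta(u))^2$ has the same $\limsup$ as $\tfrac14 u^2\,r^2(n)\sum_j\E_x^\theta g_h^2(\theta;X_{h(j-1)},X_{hj})=\tfrac14 u^2\,r^2(n)I_n(\theta)=\tfrac14 u^2$ by the very choice $r(n)=I_n^{-1/2}(\theta_0)$; in fact this yields equality of the limit with $\tfrac14u^2$, a fortiori the claimed $\le\tfrac14u^2$. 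The main obstacle I anticipate is purely a matter of rigor rather than of ideas: justifying the $L_2$-valued fundamental theorem of calculus and the interchange of $\int_0^\delta$ with $\int_{\mb X}$, i.e. turning the pointwise $L_2$-differentiability of assumption (b) plus the continuity of assumption (c) into a clean integral representation of the increment, and making sure the indicator $I\{p_h(\theta;\cdot)\ne0\}$ is handled consistently (it matters only on a set where the leading term vanishes, and $q_h=\tfrac12 g_h/\sqrt{p_h}$ is defined a.e., so the representations above are valid $\lambda$-a.e. $y$). Once that bookkeeping is in place the estimates are exactly the Cauchy–Schwarz/Fubini computation above.
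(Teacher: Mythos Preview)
Your proof is correct and follows essentially the same route as the paper: for \eqref{zbch_eta2} you write the conditional second moment as an $L_2$-integral, use the $L_2$-valued fundamental theorem of calculus for $\theta\mapsto\sqrt{p_h(\theta;x,\cdot)}$ (which is exactly what the regularity assumptions (b)+(c) provide), apply Cauchy--Schwarz in the parameter variable, and invoke condition~5; for \eqref{zbch_eta} you combine this with $r^2(n)\sum_j\E_x^\theta g_h^2=1$. The only cosmetic difference is that the paper derives \eqref{zbch_eta} from \eqref{zbch_eta2} via the Young-type inequality $(A+B)^2\le(1+\alpha)A^2+(1+1/\alpha)B^2$ and then lets $\alpha\downarrow 0$, whereas you use the $L_2$-triangle inequality directly (and correctly note this actually gives the limit $=\tfrac14u^2$, not merely $\le$); your concern about the indicator is harmless, since the conditioning integrates against $p_h(\theta;x,y)\lambda(\df y)$ and the paper likewise restricts to $\{p_h\ne0\}$ before passing to the full integral as an upper bound.
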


\begin{proof}  By  the regularity of  $\mc{E}_1$ and the Cauchy inequality  we have
\begin{multline*}
\E_x^{\theta}\left(\zeta^\theta_{jn}(u)- \frac{1}{2}
r(n)ug_h(\theta;X_{h(j-1)},X_{hj})\right)^2\\= \E_x^{\theta}
\int\limits_{\{y:p^{\theta}_h(z,y)\neq0\}}\left(\sqrt{p_h\left(\theta+
r(n)u;X_{h(j-1)},y\right)}\right.\\\left.-
\sqrt{p_h\left(\theta;X_{h(j-1)},y\right)}- r(n)uq_h(\theta;
X_{h(j-1)}, y)\right)^2\lambda(\df y)\\\leq (
r(n)u)^2\E_x^{\theta}\int_{\mb{R}}\left(\int_0^{1}
q_h\left(\theta+
r(n)uv,X_{h(j-1)},y\right)-q_h\left(\theta;X_{h(j-1)},y\right) \df
v\right)^2\lambda(\df y)\\\leq (
r(n)u)^2\E_x^{\theta}\int_{\mb{R}}\lambda(\df y)\int_0^{1}\left(
q_h\left(\theta+
r(n)uv;X_{h(j-1)},y\right)-q_h\left(\theta;X_{h(j-1)},y\right)\right)^2
\df v.
\end{multline*}
This and \eqref{loc1th1} yield \eqref{zbch_eta2}. To deduce  (\ref{zbch_eta}) from \eqref{zbch_eta2},  recall  an elementary inequality
\be\label{ung}|AB|\leq\frac{\alpha}{2}A^2+\frac{1}{2\alpha}B^2,\quad \alpha>0,\ee
and write
$$
\zeta^\theta_{jn}(u)=
\frac{1}{2} r(n)ug_h(\theta;X_{h(j-1)},X_{hj})+\left(\zeta^\theta_{jn}(u)-
\frac{1}{2} r(n)ug_h(\theta;X_{h(j-1)},X_{hj})\right)=:A+B.
$$
Then
$$\begin{aligned}
\E_x^{\theta}(\zeta^\theta_{jn}(u))^2&\leq (1+\alpha){1\over 4}u^2r^2(n)\E_x^{\theta}\left(g_h(\theta;X_{h(j-1)},X_{hj})\right)^2\\&+\left(1+{1\over \alpha}\right)\E_x^{\theta}\left(\zeta^\theta_{jn}(u)-
\frac{1}{2} r(n)ug_h(\theta;X_{h(j-1)},X_{hj})\right)^2.\end{aligned}
$$
Because by the construction
\be\label{recall}
\sum_{j=1}^n\E_x^{\theta}\left(g_h(\theta;X_{h(j-1)},X_{hj})\right)^2=I_n(\theta)=r^{-2}(n),
\ee
this leads to the bound
$$
\limsup_{n\rightarrow\infty}\sum_{j=1}^{n}\E_x^{\theta}(\zeta^\theta_{jn}(u))^2\leq\frac{1+\alpha}{4}u^2.
$$
Since $\alpha>0$ is arbitrary, this completes the proof.
\end{proof}

\begin{lem}\label{thmp1}  One has  \be\label{T1l2}
\sum_{j=1}^n(\zeta^\theta_{jn}(u))^2 \to
\frac{u^2}{4}, \quad n\to \infty \ee
in $\pr_x^{\theta}$-probability.
\end{lem}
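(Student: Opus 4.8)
The plan is to upgrade the $L_1$-convergence of expectations from Lemma \ref{eta_jn} to convergence in probability of the random sum $S_n:=\sum_{j=1}^n(\zeta^\theta_{jn}(u))^2$. The natural device is a martingale-type decomposition: write $S_n = \sum_{j=1}^n \E_x^\theta\big[(\zeta^\theta_{jn}(u))^2\,|\,\F_{h(j-1)}\big] + M_n$, where $\F_{h(j-1)}=\sigma(X_{hk}, k\le j-1)$ and $M_n = \sum_{j=1}^n\big((\zeta^\theta_{jn}(u))^2 - \E_x^\theta[(\zeta^\theta_{jn}(u))^2\,|\,\F_{h(j-1)}]\big)$ is a martingale with respect to the discrete filtration. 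For the martingale part, I would estimate $\E_x^\theta M_n^2 = \sum_{j=1}^n\E_x^\theta\big(\mathrm{Var}(\,(\zeta^\theta_{jn}(u))^2\,|\,\F_{h(j-1)})\big) \le \sum_{j=1}^n \E_x^\theta (\zeta^\theta_{jn}(u))^4$; one then has to show this sum vanishes. Here the crude bound $|\zeta^\theta_{jn}(u)|\le |A|+|B|$ from Lemma \ref{eta_jn} gives $(\zeta^\theta_{jn}(u))^4 \lesssim A^4 + B^4$; the $B$-part is controlled because $\sum_j \E_x^\theta B^2 \to 0$ and $B$ is bounded (indeed $|\zeta^\theta_{jn}(u)|$ and $|A| = \frac12 r(n)|u|\,|g_h|$ need a uniform smallness argument — more on this below), while the $A^4$-part is exactly where condition 4 of Theorem \ref{mainthm1}, i.e. \eqref{loc2th1}, enters: $r^4(n)\sum_j \E_x^\theta g_h^4 \le \big(\sup_j r^{p-2}(n)\E_x^\theta|g_h|^{p-2}\big)\cdot r^2(n)\sum_j\E_x^\theta g_h^2$ when $p\ge 4$, or more carefully via Hölder when $2<p<4$, using that $r^2(n)\sum_j \E_x^\theta g_h^2$ stays bounded by \eqref{recall}.

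Next, for the predictable (conditional-expectation) part, I would show $\sum_{j=1}^n \E_x^\theta\big[(\zeta^\theta_{jn}(u))^2\,|\,\F_{h(j-1)}\big] \to u^2/4$ in probability. Rewrite $(\zeta^\theta_{jn}(u))^2 = A^2 + 2AB + B^2$; the conditional expectation of the cross term and of $B^2$ are handled by Cauchy--Schwarz together with $\sum_j \E_x^\theta B^2\to 0$ (from \eqref{zbch_eta2}) and the boundedness of $\sum_j\E_x^\theta A^2$, so they contribute $0$ in $L_1$, hence in probability. The main term is $\sum_{j=1}^n \E_x^\theta[A^2\,|\,\F_{h(j-1)}] = \frac{u^2}{4} r^2(n)\sum_{j=1}^n \E_x^\theta\big[g_h^2(\theta;X_{h(j-1)},X_{hj})\,|\,\F_{h(j-1)}\big]$, and by condition 3 of Theorem \ref{mainthm1} the random sum $r^2(n)\sum_j g_h^2(\theta;X_{h(j-1)},X_{hj})$ already converges to $1$ in probability; passing from the sum of squares to the sum of conditional expectations of squares costs another martingale-difference estimate of the same $L_2$-type as above (its second moment is again bounded by $r^4(n)\sum_j \E_x^\theta g_h^4 \to 0$ via \eqref{loc2th1}). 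Combining, the predictable part tends to $u^2/4$ in probability, and adding the martingale part which tends to $0$ yields \eqref{T1l2}.

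The step I expect to be the main obstacle is controlling the fourth (and generally: higher-than-second) moments of $\zeta^\theta_{jn}(u)$ uniformly in $j$: the clean inequality \eqref{ung} used in Lemma \ref{eta_jn} only splits into squares, so for $(\zeta^\theta_{jn}(u))^4$ I need either a direct comparison $|\zeta^\theta_{jn}(u)|\le |A|+|B|$ with $B$ small, or a bound like $(\zeta^\theta_{jn}(u))^2 \le c\,(\zeta^\theta_{jn}(u))^2\wedge 1$ exploiting that $\zeta^\theta_{jn}(u)\ge -1$ by construction, which would let me dominate $(\zeta^\theta_{jn}(u))^4 \le \zeta^\theta_{jn}(u)^2\cdot\big(|A|+|B|\big)^2 \lesssim (\text{small})\cdot\zeta^\theta_{jn}(u)^2 + A^2\zeta^\theta_{jn}(u)^2$. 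Making the "small" uniform requires that $\max_j|A| = \frac12 r(n)|u|\max_j|g_h|\to 0$ in probability, which itself follows from \eqref{loc2th1}: $\P\big(\max_j r(n)|g_h|>\eps\big)\le \eps^{-p}r^p(n)\sum_j\E_x^\theta|g_h|^p\to 0$. Once this uniform-smallness fact is in hand, all the error terms collapse as described, and the argument is routine; I would therefore put the bulk of the writing into establishing this asymptotic negligibility of $\max_j r(n)|g_h|$ and of $\sum_j\E_x^\theta(\zeta^\theta_{jn}(u))^4$, and then assemble the decomposition above.
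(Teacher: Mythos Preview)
Your approach is both more complicated than the paper's and contains a genuine gap. The paper's proof is a one-step comparison: it shows directly that
\[
\sum_{j=1}^n(\zeta^\theta_{jn}(u))^2-\frac{u^2}{4}r^2(n)\sum_{j=1}^n g_h^2(\theta;X_{h(j-1)},X_{hj})\to 0
\]
in $\pr_x^\theta$-probability, by bounding its $L_1$-norm via the factorization $\zeta^2-A^2=(\zeta-A)(\zeta+A)$ together with \eqref{ung}, which reduces everything to \eqref{zbch_eta2}, \eqref{zbch_eta} and \eqref{recall}. Condition~3 of Theorem~\ref{mainthm1} then finishes the proof. No martingale decomposition, no conditional expectations, no moments above the second are used. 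In fact the paper argues in the \emph{opposite} order to yours: it first proves \eqref{T1l2} by this direct argument, and only afterwards (in Lemma~\ref{lcond}) establishes the convergence of the predictable sum $\sum_j \E_{x,j-1}^\theta(\zeta^\theta_{jn}(u))^2$, using \eqref{T1l2} plus a truncation.

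The gap in your route is the fourth-moment step. You need $\sum_j \E_x^\theta(\zeta^\theta_{jn}(u))^4\to 0$, which through $\zeta=A+B$ leads you to $r^4(n)\sum_j \E_x^\theta g_h^4$. Condition~4 of Theorem~\ref{mainthm1} only gives $r^p(n)\sum_j \E_x^\theta|g_h|^p\to 0$ for \emph{some} $p>2$; when $2<p<4$ the moment $\E_x^\theta g_h^4$ may simply be infinite, and no H\"older-type interpolation with \eqref{recall} can produce a fourth moment from a $p$-th moment with $p<4$. Your fallback ideas (using $\zeta\ge -1$, or bounding $\zeta^4\le \zeta^2(|A|+|B|)^2$) do not help either: $\zeta$ is unbounded above, and the smallness of $B$ is only in the sense $\sum_j\E_x^\theta B^2\to 0$, not uniformly. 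The argument \emph{can} be salvaged by a truncation at level $\eps$ (exactly what the paper does in Lemma~\ref{lcond}), but then you are taking a long detour to a result the paper obtains in a few lines.
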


\begin{proof} By the  Chebyshev inequality,
\begin{multline*}
\pr_x^{\theta}\left\{\left|\sum_{j=1}^n(\zeta^\theta_{jn}(u))^2
-\frac{1}{4}r^2(n)u^2\sum_{j=1}^n(g_h(\theta;X_{h(j-1)},X_{hj}))^2\right|>\ve\right\}\\\leq
\frac{1}{\ve}\sum_{j=1}^n\E_x^{\theta}\left|(\zeta^\theta_{jn}(u))^2
-\frac{1}{4}r^2(n)u^2(g_h(\theta;X_{h(j-1)},X_{hj}))^2\right|\\=\frac{1}{\ve}\sum_{j=1}^n\E_x^{\theta}\left|\zeta^\theta_{jn}(u)
-\frac{1}{2}r(n)ug_h(\theta;X_{h(j-1)},X_{hj})\right|\left|\zeta^\theta_{jn}(u)
+\frac{1}{2}r(n)ug_h(\theta;X_{h(j-1)},X_{hj})\right|
\end{multline*}
which by \eqref{ung}, for a given $\alpha>0$, is dominated by
\begin{multline*}
\frac{1}{2\alpha\ve}\sum_{j=1}^n\E_x^{\theta}\left(\zeta^\theta_{jn}(u)
-\frac{1}{2}r(n)ug_h(\theta;X_{h(j-1)},X_{hj})\right)^2\\+
\frac{\alpha}{2\ve}\sum_{j=1}^n\E_x^{\theta}
\left(\zeta^\theta_{jn}(u)+
\frac{1}{2}r(n)ug_h(\theta;X_{h(j-1)},X_{hj})\right)^2.
\end{multline*}
 By \eqref{zbch_eta2} the first item  of
this expression tends to zero as $n\rightarrow\infty$. Furthermore,
the Cauchy inequality together with
\eqref{zbch_eta} and \eqref{recall} imply that for the second one the upper limit does not exceed
$$
\limsup_{n\to \infty}\left(\frac{\alpha}{\ve}\sum_{j=1}^n\E_x^{\theta}(\zeta^\theta_{jn}(u))^2+
\frac{\alpha u^2}{2\ve}r^2(n)\sum_{j=1}^n
\E_x^{\theta}(g_h\left(\theta;X_{h(j-1)},X_{hj}\right))^2 \right)\leq
\frac{3\alpha u^2}{2\ve}.
$$
Since $\alpha>0$ is arbitrary, this proves that the difference
$$\sum_{j=1}^n(\zeta^\theta_{jn}(u))^2
-\frac{1}{4}r^2(n)u^2\sum_{j=1}^n(g_h(\theta;X_{h(j-1)},X_{hj}))^2$$
tends to $0$ in $\pr_x^{\theta}$-probability. Combined with the condition 3 of Theorem \ref{mainthm1}, this gives the required statement.
\end{proof}

\begin{lem}\label{thmp2} One has  \be\label{T1l1}
\max_{1\leq j\leq
n}|\zeta^\theta_{jn}(u)|\to 0, \quad n\to \infty
\ee
in $\pr_x^{\theta}$-probability.
\end{lem}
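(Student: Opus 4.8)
The plan is to deduce the uniform smallness of the $\zeta^\theta_{jn}(u)$ from a moment bound on their $p$-th powers, for the exponent $p>2$ furnished by condition 4 of Theorem \ref{mainthm1}. First I would note the elementary pointwise inequality: on the event $\{p_h(\theta;X_{h(j-1)},X_{hj})\neq 0\}$ one has
$$
|\zeta^\theta_{jn}(u)|=\left|\sqrt{\tfrac{p_h(\theta+r(n)u;X_{h(j-1)},X_{hj})}{p_h(\theta;X_{h(j-1)},X_{hj})}}-1\right|\le \frac{1}{2}\left|\int_0^1 \frac{g_h(\theta+r(n)uv;X_{h(j-1)},X_{hj})}{\text{(normalizing square roots)}}\,dv\right|,
$$
so after re-expressing the increment of $\sqrt{p_h}$ through $q_h$ exactly as in the chain of inequalities in the proof of Lemma \ref{eta_jn}, I get, with $R_{jn}:=\zeta^\theta_{jn}(u)-\tfrac12 r(n)u\,g_h(\theta;X_{h(j-1)},X_{hj})$,
$$
|\zeta^\theta_{jn}(u)|\le \tfrac12 r(n)|u|\,\bigl|g_h(\theta;X_{h(j-1)},X_{hj})\bigr|+|R_{jn}|.
$$
Hence $\max_{j\le n}|\zeta^\theta_{jn}(u)|$ is controlled by $\tfrac12|u|\max_{j\le n} r(n)|g_h(\theta;X_{h(j-1)},X_{hj})|$ plus $\max_{j\le n}|R_{jn}|$.

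For the first term I would use the crude bound $\max_{j\le n}|c_j|^p\le \sum_{j\le n}|c_j|^p$ together with condition 4: for any $\ve>0$,
$$
\pr_x^\theta\Bigl\{\max_{j\le n} r(n)|g_h(\theta;X_{h(j-1)},X_{hj})|>\ve\Bigr\}\le \ve^{-p} r^p(n)\E_x^\theta\sum_{j=1}^n|g_h(\theta;X_{h(j-1)},X_{hj})|^p\to 0
$$
by \eqref{loc2th1}. For the second term, since $|R_{jn}|\le 1$ trivially is not quite enough, I would instead estimate $\max_{j\le n}|R_{jn}|^2\le \sum_{j\le n}|R_{jn}|^2$ and invoke \eqref{zbch_eta2}: Chebyshev gives $\pr_x^\theta\{\max_{j\le n}|R_{jn}|>\ve\}\le \ve^{-2}\sum_{j\le n}\E_x^\theta R_{jn}^2\to 0$. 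Combining the two, $\max_{j\le n}|\zeta^\theta_{jn}(u)|\to 0$ in $\pr_x^\theta$-probability, which is \eqref{T1l1}.

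The only genuinely delicate point is the passage from the increment $\sqrt{p_h(\theta+r(n)u;\cdot)}-\sqrt{p_h(\theta;\cdot)}$ to the integral $\int_0^1 q_h(\theta+r(n)uv;\cdot)\,dv$ at the level of a single realization $(X_{h(j-1)},X_{hj})$ rather than in $L_2(\lambda)$; this is exactly the subtlety already handled inside the proof of Lemma \ref{eta_jn} via the $L_2$-differentiability in condition (b) of regularity and Cauchy--Schwarz, so I would simply reuse that argument and take expectations in the same way. Everything else is Chebyshev plus the two already-proven moment estimates \eqref{loc2th1} and \eqref{zbch_eta2}, so I expect no further obstacle.
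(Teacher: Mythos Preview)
Your proposal is correct and follows essentially the same route as the paper: decompose $\zeta^\theta_{jn}(u)=\tfrac12 r(n)u\,g_h(\theta;X_{h(j-1)},X_{hj})+R_{jn}$, control the $g_h$ part via condition~4 with a $p$-th moment Chebyshev bound, and control the $R_{jn}$ part via \eqref{zbch_eta2} with a second-moment Chebyshev bound. The ``delicate point'' you flag is in fact not an issue at all: the decomposition is purely algebraic (it is the definition of $R_{jn}$), so no pointwise integral representation is needed---the $L_2$ estimate \eqref{zbch_eta2} on $R_{jn}$ is already exactly what is required.
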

\begin{proof}
We have
\begin{multline*}
\pr_x^{\theta}\left\{\max_{1\leq j\leq
n}|\zeta^\theta_{jn}(u)|>\ve\right\}\leq
\sum_{j=1}^n\pr_x^{\theta}\left\{|\zeta^\theta_{jn}(u)|>\ve\right\}\\\leq
\sum_{j=1}^n\pr_x^{\theta}\left\{\left|\zeta^\theta_{jn}(u)
-\frac{1}{2}
r(n)ug_h(\theta;X_{h(j-1)},X_{hj})\right|>\frac{\ve}{2}\right\}\\+
\sum_{j=1}^n\pr_x^{\theta}\left\{\left|g_h(\theta;X_{h(j-1)},X_{hj})\right|>
\frac{\ve}{4 r(n)|u|}\right\}.
\end{multline*}
  The first sum in the r.h.s. of this
inequality vanishes as $n\rightarrow\infty$ because of \eqref{zbch_eta2}, the second sum vanishes because of the condition 4 of Theorem \ref{mainthm1}.
\end{proof}

\begin{res}\label{rem} By Lemma \ref{thmp2} and Lemma \ref{thmp1}, we have
\be\label{T1l4}
\sum_{j=1}^n|\zeta^\theta_{jn}(u)|^3\to 0,\quad n\to \infty \ee
in $\pr_x^{\theta}$-probability.
\end{res}

Because of the Markov structure of the sample, in addition to Lemma \ref{thmp1} we will need the following statement. Denote
$$
\mathcal{F}_{j}=\sigma(X_{hi}, i\leq j), \quad \E_{x,j}^{\theta}=\E_x^\theta[\cdot|\mathcal{F}_j].
$$

\begin{lem}\label{lcond} One has \be\label{ll}\sum_{j=1}^n\E_{x,j-1}^{\theta}(\zeta^\theta_{jn}(u))^2 \to
\frac{u^2}{4}, \quad n\to \infty \ee
in $\pr_x^{\theta}$-probability.
\end{lem}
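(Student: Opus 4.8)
The plan is to reduce the conditional statement \eqref{ll} to its unconditional counterpart, Lemma \ref{thmp1}, plus a vanishing‐variance argument for the difference between the conditional and unconditional sums. First I would center each summand: writing
$$
\eta_{jn}:=\E_{x,j-1}^{\theta}(\zeta^\theta_{jn}(u))^2,\qquad \xi_{jn}:=(\zeta^\theta_{jn}(u))^2-\eta_{jn},
$$
the random variables $\xi_{jn}$, $j=1,\dots,n$, form martingale differences with respect to the filtration $\{\mathcal F_j\}$, because $\E_{x,j-1}^{\theta}\xi_{jn}=0$ by the tower property and $\xi_{jn}$ is $\mathcal F_j$-measurable. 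Hence the partial sums $\sum_{j=1}^n\xi_{jn}$ form a martingale, and by orthogonality of martingale differences
$$
\E_x^{\theta}\left(\sum_{j=1}^n\xi_{jn}\right)^2=\sum_{j=1}^n\E_x^{\theta}\xi_{jn}^2\le \sum_{j=1}^n\E_x^{\theta}(\zeta^\theta_{jn}(u))^4.
$$
So if I can show $\sum_{j=1}^n\E_x^{\theta}(\zeta^\theta_{jn}(u))^4\to 0$, then $\sum_{j=1}^n\xi_{jn}\to 0$ in $L_2$, hence in probability, and combining with Lemma \ref{thmp1} (which gives $\sum_{j=1}^n(\zeta^\theta_{jn}(u))^2=\sum_{j=1}^n(\eta_{jn}+\xi_{jn})\to u^2/4$) yields $\sum_{j=1}^n\eta_{jn}\to u^2/4$, which is exactly \eqref{ll}.

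It remains to control the fourth moments $\sum_{j=1}^n\E_x^{\theta}(\zeta^\theta_{jn}(u))^4$. The idea is to split $\zeta^\theta_{jn}(u)=A_j+B_j$ with $A_j=\tfrac12 r(n)ug_h(\theta;X_{h(j-1)},X_{hj})$ and $B_j=\zeta^\theta_{jn}(u)-A_j$, exactly as in Lemma \ref{eta_jn}, and use the elementary bound $(A_j+B_j)^4\le 8A_j^4+8B_j^4$. The $A_j$-part contributes $\tfrac{1}{2}r^4(n)u^4\sum_{j=1}^n\E_x^{\theta}|g_h(\theta;X_{h(j-1)},X_{hj})|^4$, which tends to $0$ by condition 4 of Theorem \ref{mainthm1} (with $p>2$; if $p\ge 4$ this is immediate, and if $2<p<4$ one interpolates, bounding $|g_h|^4\le |g_h|^p$ on the set $\{|g_h|\ge r(n)^{-1}\}$ — but in fact the cleaner route is to note $r^4(n)\sum\E|g_h|^4\le (r^2(n))^{2-p/2}\cdot r^p(n)\E\sum|g_h|^p\cdot(\text{const})$ after a truncation, or simply to observe that $(\zeta^\theta_{jn}(u))^2\le C$ is bounded by $(\sqrt{\rho}-1)^2$ which combined with $\max_j|\zeta_{jn}|\to 0$ from Lemma \ref{thmp2} and $\sum_j(\zeta_{jn})^2$ bounded in probability gives $\sum_j(\zeta_{jn})^4\le \max_j(\zeta_{jn})^2\cdot\sum_j(\zeta_{jn})^2\to 0$ in probability, and uniform integrability upgrades this to $L_1$). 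The $B_j$-part: note $|B_j|\le 1+|A_j|$ pointwise since $|\zeta^\theta_{jn}(u)|\le$ something bounded only when $\rho$ is bounded, so more carefully one uses $B_j^4\le B_j^2\cdot(2+2A_j^2)$ and then $\sum_j\E_x^{\theta}B_j^2\to 0$ by \eqref{zbch_eta2} while $\sup_j(2+2A_j^2)$ needs a maximal bound — here I would instead argue directly that $\sum_j\E_x^{\theta}B_j^4\le (\sup_j\|B_j\|_\infty^2+C)\sum_j\E_x^{\theta}B_j^2$ is not available, so the honest approach is the one just sketched via $\sum_j(\zeta_{jn})^4\le\max_j(\zeta_{jn})^2\sum_j(\zeta_{jn})^2$, giving convergence in probability, and then to get convergence of $\sum_j\E\xi_{jn}^2$ one invokes that $(\zeta_{jn})^2\le 2\rho+2$ and a uniform integrability / dominated convergence argument built on \eqref{zbch_eta}.

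The main obstacle I anticipate is precisely the passage from ``$\sum_j(\zeta^\theta_{jn}(u))^4\to 0$ in probability'' to ``$\sum_j\E_x^{\theta}(\zeta^\theta_{jn}(u))^4\to 0$'', i.e. getting the $L_1$ (or $L_2$-of-the-martingale) control rather than merely in-probability control, since the martingale orthogonality argument for $\sum_j\xi_{jn}$ genuinely needs the summed fourth moments to vanish. The resolution is to combine the decomposition $(\zeta^\theta_{jn}(u))^4\le 8A_j^4+8B_j^4$ with the observation that on $\{|g_h|\le \delta/r(n)\}$ one has $|A_j|\le \delta|u|/2$, so $A_j^4\le (\delta|u|/2)^2 A_j^2$ there, hence $\sum_j\E A_j^4\le (\delta|u|/2)^2\sum_j\E A_j^2 + \tfrac{1}{16}u^4 r^4(n)\sum_j\E[|g_h|^4 I\{|g_h|>\delta/r(n)\}]$; the first term is $(\delta|u|/2)^2\cdot u^2/4+o(1)$ by \eqref{recall}, and the second is bounded by $\tfrac{1}{16}u^4(\delta)^{-(p-4)}r^p(n)\E\sum_j|g_h|^p$ when $p>4$ — and when $2<p\le 4$ one simply uses $r^4(n)\sum_j\E|g_h|^4\le r^{p}(n)\sum_j\E|g_h|^p\cdot\sup_j(r(n)|g_h|)^{4-p}$ is not pointwise-bounded, so instead one truncates at level $\delta/r(n)$ getting $r^4(n)\E[|g_h|^4I\{|g_h|\le\delta/r(n)\}]\le \delta^{4-p}r^p(n)\E|g_h|^p$, and $r^4(n)\E[|g_h|^4I\{|g_h|>\delta/r(n)\}]$ — hmm this last term is the wrong sign. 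The clean fix is to handle the large-jump part via $\sum_j\E[(\zeta_{jn})^2 I\{(\zeta_{jn})^2>\delta\}]\to 0$, which follows from \eqref{zbch_eta2} together with condition 4, and then $\sum_j\E(\zeta_{jn})^4\le \delta\sum_j\E(\zeta_{jn})^2 + \sup_j(\zeta_{jn})^2\big|_{(\zeta_{jn})^2>\delta}\cdot\ldots$; letting $\delta\to 0$ after $n\to\infty$ closes the estimate. I would write this out carefully, as the bookkeeping between the two regimes $p>4$ and $2<p\le 4$ is the only genuinely delicate point.
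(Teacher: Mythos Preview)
Your overall architecture is the same as the paper's: set
$\xi_{jn}=(\zeta^\theta_{jn}(u))^2-\E_{x,j-1}^\theta(\zeta^\theta_{jn}(u))^2$,
note that $\{\xi_{jn}\}$ is a martingale-difference array, and reduce
\eqref{ll} to showing $S_n:=\sum_j\xi_{jn}\to 0$ in probability, after
which Lemma~\ref{thmp1} finishes. The divergence from the paper---and the
genuine gap---is that you try to control $S_n$ in $L_2$ via
$\E_x^\theta S_n^2\le\sum_j\E_x^\theta(\zeta^\theta_{jn}(u))^4$ and then
spend the rest of the argument trying to force
$\sum_j\E_x^\theta(\zeta^\theta_{jn}(u))^4\to 0$. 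This quantity need not
even be finite under the hypotheses of Theorem~\ref{mainthm1}: condition~4
only supplies a moment of order $p>2$, and when $p\in(2,4)$ there is no
bound on $\E|g_h|^4$ at all, so your $A_j$-part
$r^4(n)\sum_j\E|g_h|^4$ can be $+\infty$. Your successive repair
attempts (interpolation, truncation at $\delta/r(n)$, the ``clean fix''
ending in ``$\sup_j(\zeta_{jn})^2|_{(\zeta_{jn})^2>\delta}\cdot\ldots$'')
all run into the same wall: on the set $\{|\zeta_{jn}|>\sqrt\delta\}$ you
cannot dominate $(\zeta_{jn})^4$ by a bounded multiple of $(\zeta_{jn})^2$.

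The paper sidesteps fourth moments entirely. It truncates \emph{inside} the
martingale difference: with $\eps>0$ fixed, set
$\chi_{jn}^\eps=(\zeta_{jn})^2\1_{\{|\zeta_{jn}|\le\eps\}}
-\E_{x,j-1}^\theta\big[(\zeta_{jn})^2\1_{\{|\zeta_{jn}|\le\eps\}}\big]$
and $S_n^\eps=\sum_j\chi_{jn}^\eps$. Then
$\E_x^\theta(S_n^\eps)^2\le\sum_j\E_x^\theta(\zeta_{jn})^4
\1_{\{|\zeta_{jn}|\le\eps\}}\le\eps^2\sum_j\E_x^\theta(\zeta_{jn})^2$,
which is $\le\eps^2u^2/4+o(1)$ by \eqref{zbch_eta} and uses only second
moments. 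For the remainder one abandons $L_2$ altogether and works in
$L_1$:
$\E_x^\theta|S_n-S_n^\eps|\le 2\sum_j\E_x^\theta
\big[(\zeta_{jn})^2\1_{\{|\zeta_{jn}|>\eps\}}\big]
\le 2\E_x^\theta\big[\1_{\Omega_n^\eps}\sum_j(\zeta_{jn})^2\big]$,
where $\Omega_n^\eps=\{\max_j|\zeta_{jn}|>\eps\}$. Since
$\sum_j(\zeta_{jn})^2\to u^2/4$ in probability (Lemma~\ref{thmp1}) with
expectations bounded by \eqref{zbch_eta}, the sequence
$\{\sum_j(\zeta_{jn})^2\}_n$ is uniformly integrable, and
$\pr_x^\theta(\Omega_n^\eps)\to 0$ by Lemma~\ref{thmp2}; hence
$\E_x^\theta|S_n-S_n^\eps|\to 0$. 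Letting $\eps\downarrow 0$ after
$n\to\infty$ gives $S_n\to 0$ in $L_1$. The point you missed is that the
tail piece should be estimated in $L_1$ directly, not squeezed back into the
$L_2$ martingale bound; that is precisely what makes the argument go
through with only the $p>2$ moment assumption.
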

\begin{proof} Denote
$$
\chi_{jn}=(\zeta^\theta_{jn}(u))^2-\E_{x,j-1}^{\theta}(\zeta^\theta_{jn}(u))^2,\quad S_n=\sum_{j=1}^n\chi_{jn},
$$
then by (\ref{T1l2}) it us enough to prove  that $S_n\to 0$ in $\pr_x^{\theta}$-probability.
Fix $\eps>0$, and put
$$
\chi_{jn}^\eps=(\zeta^\theta_{jn}(u))^2 1_{|\zeta^\theta_{jn}(u)|\leq \eps}-\E_{x,j-1}^{\theta}\Big((\zeta^\theta_{jn}(u))^21_{|\zeta^\theta_{jn}(u)|\leq \eps}\Big),\quad S_n^\eps=\sum_{j=1}^n\chi_{jn}^\eps.
$$
By the construction $\{\chi_j^\eps, j=1,\dots, n\}$ is a martingale difference, hence  $$\begin{aligned}
\E_{x}^{\theta}(S_n^\eps)^2&=\sum_{k=1}^n
\E_{x}^{\theta}(\chi_{jn}^\eps)^2\leq \sum_{k=1}^n
\E_{x}^{\theta}(\zeta^\theta_{jn}(u))^4 1_{|\zeta^\theta_{jn}(u)|\leq \eps}\leq \eps^2\E_{x}^{\theta}\sum_{k=1}^n (\zeta^\theta_{jn}(u))^2.
\end{aligned}
$$
Hence by (\ref{zbch_eta}) and the Cauchy inequality,
\be\label{ESn}
\limsup_{n\to\infty}\E_{x}^{\theta}|S_n^\eps|\leq {\eps|u|\over 2}
\ee

Now, let us estimate the difference $S_n-S_n^\eps$. Note that, using the first statement in   Lemma \ref{eta_jn},  one can improve the statement of Lemma \ref{thmp1} and show that the convergence (\ref{T1l2}) holds true in $L_1(\pr_x^\theta)$; see e.g. Theorem A.I.4 in \cite{IKh}. In particular, this means that the sequence
$$
\sum_{j=1}^n(\zeta^\theta_{jn}(u))^2, \quad n\geq 1
$$
is uniformly integrable.  Hence, because by Lemma \ref{thmp2} the probabilities of the sets
\be\label{Oeps}
\Omega_n^\eps=\left\{\max_{j\leq n}|\zeta_{jn}|>\ve\right\}
\ee
 tend to zero as $n\to \infty$, we have
$$
\E_x^\theta\left(1_{\Omega_n^\eps}\sum_{j=1}^n(\zeta^\theta_{jn}(u))^2\right)\to 0.
$$
One has
$$
\chi_{jn}-\chi_{jn}^\eps=(\zeta^\theta_{jn}(u))^2 1_{|\zeta^\theta_{jn}(u)|> \eps}-\E_{x,j}^\theta(\zeta^\theta_{jn}(u))^2 1_{|\zeta^\theta_{jn}(u)|> \eps},
$$
hence
$$\begin{aligned}
\E_x^\theta|S_n-S_n^\eps|&\leq
2\sum_{j=1}^n\E_x^\theta (\zeta^\theta_{jn}(u))^2 1_{|\zeta^\theta_{jn}(u)|> \eps}\leq 2\E_x^\theta\left(1_{\Omega_n^\eps}\sum_{j=1}^n(\zeta^\theta_{jn}(u))^2\right)\to 0.
\end{aligned}$$
Together with (\ref{ESn}) this gives
$$
\limsup_{n\to\infty}\E_{x}^{\theta}|S_n|\leq {\eps|u|\over 2},
$$
which completes the proof because $\eps>0$ is arbitrary.
\end{proof}

The final preparatory result we require is the following.

\begin{lem}\label{thmp3} One has
 \be\label{T1l3}
2\sum_{j=1}^n\zeta^\theta_{jn}(u) -
r(n)u\sum_{j=1}^ng_h(\theta;X_{h(j-1)},X_{hj})\to
-\frac{u^2}{4},\quad n\to \infty \ee in
$\pr_x^{\theta}$-probability.
\end{lem}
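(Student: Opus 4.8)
The plan is to run the Markov-chain version of LeCam's computation: peel off a martingale term that is negligible by \eqref{zbch_eta2}, and recognise the remaining ``drift'' as $-u^2/4$ via Lemma \ref{lcond}. Set $Y_{jn}:=\zeta^\theta_{jn}(u)-\tfrac12 r(n)u\,g_h(\theta;X_{h(j-1)},X_{hj})$, so that the left-hand side of \eqref{T1l3} equals $W_n:=2\sum_{j=1}^nY_{jn}$. Splitting $Y_{jn}=(Y_{jn}-\E_{x,j-1}^{\theta}Y_{jn})+\E_{x,j-1}^{\theta}Y_{jn}$ and using $\E_{x,j-1}^{\theta}g_h(\theta;X_{h(j-1)},X_{hj})=0$, which gives $\E_{x,j-1}^{\theta}Y_{jn}=\E_{x,j-1}^{\theta}\zeta^\theta_{jn}(u)$, we obtain $W_n=M_n+A_n$ with
$$
M_n:=2\sum_{j=1}^n\bigl(Y_{jn}-\E_{x,j-1}^{\theta}Y_{jn}\bigr),\qquad A_n:=2\sum_{j=1}^n\E_{x,j-1}^{\theta}\zeta^\theta_{jn}(u).
$$
The summands of $M_n$ are $\mathcal{F}_j$-martingale differences, hence $\E_x^{\theta}M_n^2=4\sum_{j=1}^n\E_x^{\theta}(Y_{jn}-\E_{x,j-1}^{\theta}Y_{jn})^2\le 4\sum_{j=1}^n\E_x^{\theta}Y_{jn}^2$, and the last sum tends to $0$ by \eqref{zbch_eta2}; thus $M_n\to0$ in $\pr_x^{\theta}$-probability, and it remains to show $A_n\to-u^2/4$.

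Evaluating the conditional expectations against the density $p_h(\theta;X_{h(j-1)},\cdot)$, and using that both $p_h(\theta;X_{h(j-1)},\cdot)$ and $p_h(\theta+r(n)u;X_{h(j-1)},\cdot)$ integrate to $1$, one gets the elementary identity
$$
2\,\E_{x,j-1}^{\theta}\zeta^\theta_{jn}(u)=-\E_{x,j-1}^{\theta}(\zeta^\theta_{jn}(u))^2-\delta_{jn},
$$
where $\delta_{jn}:=\int_{\{y:\,p_h(\theta;X_{h(j-1)},y)=0\}}p_h(\theta+r(n)u;X_{h(j-1)},y)\,\lambda(\df y)\ge0$. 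By Lemma \ref{lcond} the conditional sums $\sum_{j=1}^n\E_{x,j-1}^{\theta}(\zeta^\theta_{jn}(u))^2$ converge to $u^2/4$ in probability, so $A_n\to-u^2/4$ reduces to $\sum_{j=1}^n\delta_{jn}\to0$ in probability; and since $\delta_{jn}\ge0$ it is enough to check $\E_x^{\theta}\sum_{j=1}^n\delta_{jn}\to0$. This ``singular part'' $\delta_{jn}$ (it vanishes identically whenever the transition density is everywhere positive, but need not in general) is the delicate point of the argument: it is not controlled by any of the earlier lemmas.

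To eliminate it I would squeeze $\delta_{jn}$ by the Hellinger-type quantities $H_{jn}^2:=\int\bigl(\sqrt{p_h(\theta+r(n)u;X_{h(j-1)},y)}-\sqrt{p_h(\theta;X_{h(j-1)},y)}\bigr)^2\lambda(\df y)$, for which $H_{jn}^2=\E_{x,j-1}^{\theta}(\zeta^\theta_{jn}(u))^2+\delta_{jn}$. On one side, $\E_x^{\theta}\sum_{j=1}^nH_{jn}^2\ge\E_x^{\theta}\sum_{j=1}^n\E_{x,j-1}^{\theta}(\zeta^\theta_{jn}(u))^2=\E_x^{\theta}\sum_{j=1}^n(\zeta^\theta_{jn}(u))^2\to u^2/4$, the last convergence being the $L_1(\pr_x^{\theta})$-strengthening of Lemma \ref{thmp1} already used in the proof of Lemma \ref{lcond} (valid by \eqref{zbch_eta} and Theorem A.I.4 in \cite{IKh}). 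On the other side, the $L_2$ fundamental-theorem-of-calculus identity used in the proof of Lemma \ref{eta_jn} gives $H_{jn}^2\le r^2(n)u^2\int_0^1\|q_h(\theta+r(n)uv;X_{h(j-1)},\cdot)\|_{L_2(\lambda)}^2\,\df v$; writing $q_h(\theta+r(n)uv;X_{h(j-1)},\cdot)=q_h(\theta;X_{h(j-1)},\cdot)+\bigl(q_h(\theta+r(n)uv;X_{h(j-1)},\cdot)-q_h(\theta;X_{h(j-1)},\cdot)\bigr)$, using $r^2(n)\E_x^{\theta}\sum_{j=1}^n\|q_h(\theta;X_{h(j-1)},\cdot)\|_{L_2(\lambda)}^2=\tfrac14 r^2(n)I_n(\theta)=\tfrac14$ by \eqref{fischinf} and the normalization $r(n)=I_n^{-1/2}(\theta)$, and estimating the cross and remainder terms by the Cauchy inequality together with condition \eqref{loc1th1} (with, say, $N=|u|+1$), one obtains $\limsup_{n\to\infty}\E_x^{\theta}\sum_{j=1}^nH_{jn}^2\le u^2/4$, with enough uniformity in $v\in[0,1]$ to pass the estimate through the $\df v$-integral. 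Hence $\E_x^{\theta}\sum_{j=1}^nH_{jn}^2\to u^2/4$, so $\E_x^{\theta}\sum_{j=1}^n\delta_{jn}=\E_x^{\theta}\sum_{j=1}^nH_{jn}^2-\E_x^{\theta}\sum_{j=1}^n(\zeta^\theta_{jn}(u))^2\to0$, and combining $A_n\to-u^2/4$ with $M_n\to0$ gives \eqref{T1l3}.
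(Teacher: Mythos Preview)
Your argument follows the same decomposition as the paper's: both split the left-hand side into a martingale piece (your $M_n$, the paper's $V_n$) controlled in $L_2$ by \eqref{zbch_eta2}, and a drift $2\sum_j\E_{x,j-1}^\theta\zeta^\theta_{jn}(u)$ identified through the algebraic identity $(1+\zeta^\theta_{jn}(u))^2=p_h(\theta+r(n)u)/p_h(\theta)$ together with Lemma~\ref{lcond}.

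The one point where you go beyond the paper is your treatment of the singular part $\delta_{jn}$. The paper simply asserts
\[
\E_{x,j-1}^\theta\frac{p_h(\theta+r(n)u;X_{h(j-1)},X_{hj})}{p_h(\theta;X_{h(j-1)},X_{hj})}=1,
\]
which is the same as declaring $\delta_{jn}=0$; this is justified only when $\P_{X_{h(j-1)}}^{\theta+r(n)u}(X_h\in\cdot)\ll\P_{X_{h(j-1)}}^{\theta}(X_h\in\cdot)$ (true in the paper's main application in Section~\ref{s22}, where the transition density is everywhere positive, but not in the generality of Theorem~\ref{mainthm1}). Your Hellinger squeeze --- writing $H_{jn}^2=\E_{x,j-1}^\theta(\zeta^\theta_{jn}(u))^2+\delta_{jn}$, showing $\limsup_n\E_x^\theta\sum_j H_{jn}^2\le u^2/4$ via \eqref{loc1th1} and the normalization $r^2(n)I_n(\theta)=1$, and matching it against the $L_1$ limit $\E_x^\theta\sum_j(\zeta^\theta_{jn}(u))^2\to u^2/4$ already used in the proof of Lemma~\ref{lcond} --- is correct and removes that implicit absolute-continuity assumption. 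So your proof is the same route as the paper's, but patches a gap the paper leaves open at the stated level of generality.
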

\begin{proof} We have the equality
$$(\zeta^\theta_{jn}(u))^2=\frac{p_h(\theta+r(n)u;X_{h(j-1)},X_{hj})}{p_h(\theta;
X_{h(j-1)},X_{hj})} -1-2\zeta^\theta_{jn}(u)$$ valid $\pr_x^\theta$-a.s. Note that by the Markov property of $X$ one has
\begin{align*}
\E_{x,j-1}^\theta&\frac{p_h(\theta+r(n)u;X_{h(j-1)},X_{hj})}{p_h(\theta;
X_{h(j-1)},X_{hj})}\\&=\int_{\mathbb{X}}\frac{p_h(\theta+r(n)u;X_{h(j-1)},y)}{p_h(\theta;
X_{h(j-1)},y)}p_h(\theta;
X_{h(j-1)},y)\lambda(dy)=1;
\end{align*}
hence by Lemma \ref{lcond} one has that
 $$ \sum\limits_{j=1}^n
\E_{x,j-1}^{\theta}\zeta^\theta_{jn}(u) \to -\frac{u^2}{8}$$
in $\pr_x^{\theta}$-probability. Therefore, what we have to prove in fact is that
$$
V_n:=2\sum_{j=1}^n\left(\zeta^\theta_{jn}(u)-
\E_{x,j-1}^{\theta}\zeta^\theta_{jn}(u)\right)
-r(n)u\sum_{j=1}^ng_h(\theta;X_{h(j-1)},X_{hj})\to 0
$$
in $\pr_x^{\theta}$-probability.  By \eqref{g_mart} the sequence
$$
\zeta^\theta_{jn}(u)-
\E_{x,j-1}^{\theta}\zeta^\theta_{jn}(u)
-r(n)ug_h(\theta;X_{h(j-1)},X_{hj}), \quad j=1, \dots n
$$
is a martingale difference, hence
$$
\E_x^\theta V_n^2\leq
4\sum_{j=1}^n\E_x^{\theta}\left(\zeta^\theta_{jn}(u)
-\frac{1}{2}r(n)ug_h(\theta;X_{h(j-1)},X_{hj})\right)^2,
$$
which tends to zero as $n\to \infty$ by \eqref{zbch_eta2}.
\end{proof}

Now, we can finalize the proof of Theorem \ref{mainthm1}. Fix $\eps\in (0,1)$ and consider the sets $\Omega_n^\eps$ defined by  (\ref{Oeps});
by Lemma \ref{thmp2} we have $\pr_x^{\theta}(\Omega_n^\eps)\to 0$. Using the Taylor expansion for the function $\log(1+x)$, we obtain that there exist a constant $C_\eps$ and random variables $\alpha_{jn}$ such that $|\alpha_{jn}|<C_\eps$, for which  the following identity holds true outside of  the set $\Omega_n^\eps$:
$$
\sum_{j=1}^n\log \frac{p_h( \theta+
r(n)u;X_{h(j-1)},X_{hj})}{p_h(\theta;X_{h(j-1)},X_{hj})}=
2\sum_{j=1}^n\zeta^\theta_{jn}(u)-\sum_{j=1}^n(\zeta^\theta_{jn}(u))^2+
\sum_{j=1}^n\alpha_{jn}|\zeta^\theta_{jn}(u)|^3.
$$

 Then by Lemma \ref{thmp1}, Lemma \ref{thmp3}, and Corollary \ref{rem} we have

$$\begin{aligned}
\log Z_{n,\theta}(u)&=\sum_{j=1}^n\log \frac{p_h(\theta+
r(n)u;X_{h(j-1)},X_{hj})}{p_h(\theta;X_{h(j-1)},X_{hj})}\\ &\hspace*{1cm}=r(n)u\sum_{j=1}^ng_h(\theta;X_{h(j-1)},X_{hj})- \frac{u^2}{4}- \frac{u^2}{4}+\Psi_n,
\end{aligned}
$$
where $\Psi_n\to 0$ in $\pr_x^{\theta}$-probability. By the asymptotic normality condition 2, this completes the proof. \qed

\section{Proof of Theorem \ref{mainthm2}}\label{s32}

To prove Theorem \ref{mainthm2}, we verify the conditions of Theorem \ref{mainthm1}. First, let us give an   auxiliary result, which will be used repeatedly in the proof.

\begin{lem} Under conditions \textbf{A} and \textbf{H}  for every $p\in(2,4+\beta)$ there exists a constant $C$ such that for all $x\in\R$, $\theta\in (\theta_-, \theta_+)$,  and $t\geq 0$ \be\label{g_mom} \E_x^{\theta}
\Big|g_h(\theta;x, X_{h})\Big|^p\leq C(1+|x|)^p,\quad \E_x^\theta
|X_{t}|^p\le C(1+|x|^p).\ee
\end{lem}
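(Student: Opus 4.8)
The plan is to prove the two estimates in \eqref{g_mom} separately, starting with the second one, which is the standard moment bound for the SDE \eqref{eq1}, and then using it to deduce the first. For the second estimate, $\E_x^\theta|X_t|^p\le C(1+|x|^p)$, I would apply the Itô formula for jump processes to $|X_t|^p$ (or to $(1+|X_t|^2)^{p/2}$ to avoid the singularity at the origin), take expectations, and control the drift contribution using the linear growth bound \eqref{lin_gr1} from condition \textbf{A}(i), while the jump contributions are controlled by the moment condition $\int_{|u|\ge1}u^{4+\beta}\mu(du)<\infty$ from \textbf{H}(i), which is precisely what guarantees finiteness of $\E|Z_t|^p$ for $p<4+\beta$. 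A Gronwall argument then closes the bound on any fixed interval, with the constant $C$ depending on $t$ through $e^{Ct}$; since we only need it for fixed $h$ (and the dissipativity in \textbf{A}(ii) could even be used to get uniformity in $t$, though that is not required here), this is routine. I would in fact note that condition \textbf{A}(ii), $\limsup_{|x|\to\infty}a_\theta(x)/x<0$ uniformly on $(\theta_-,\theta_+)$, gives a drift term of the form $-c|X_t|^p + C$ for large $|X_t|$, so the bound holds uniformly in $t\ge0$ and uniformly in $\theta\in(\theta_-,\theta_+)$.

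For the first estimate, $\E_x^\theta|g_h(\theta;x,X_h)|^p\le C(1+|x|)^p$, I would invoke the Malliavin-calculus-based integral representation of $g_h = \partial_\theta p_h / p_h \cdot$ (up to the factor coming from \eqref{d_hheta_rep}, namely $g_h(\theta;x,y)=\partial_\theta\log p_h(\theta;x,y)$) established in \cite{MLE} and \cite{SDer}. That representation expresses $g_h(\theta;x,X_h)$ as a conditional expectation $\E_x^\theta[\,H_h\mid X_h\,]$ of an explicit Malliavin weight $H_h$ built from the solution flow $\partial_\theta X_t^\theta$, its $x$-derivatives, and certain stochastic integrals against the compensated Poisson measure. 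Using Jensen's inequality for the conditional expectation, $\E_x^\theta|g_h(\theta;x,X_h)|^p\le \E_x^\theta|H_h|^p$, and then estimating the $L_p$ norm of $H_h$ via Hölder's inequality together with moment bounds on each of its constituent factors: the derivative processes $\partial_\theta X^\theta$, $\partial_x X^\theta$, $\partial^2_{xx}X^\theta$ etc.\ satisfy linear SDEs whose coefficients are the (bounded) derivatives of $a_\theta$ listed in \textbf{A}(i), so they have finite moments of all orders with at most linear growth in $|x|$ coming from the $\partial_\theta a_\theta(x)$, $\partial^2_{\theta\theta}a_\theta(x)$ terms bounded by $C(1+|x|)$ in \eqref{lin_gr1}; the stochastic-integral pieces have moments controlled by \textbf{H}(i) and the Kunita--type BDG inequality, while the density lower-bound-type quantities entering the weight are handled by the non-degeneracy built into \textbf{H}(ii)--(iv). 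Collecting all factors and using that only the $\theta$-derivative terms carry the $|x|$-growth gives the bound $C(1+|x|)^p$.

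The main obstacle is the first estimate: one must actually extract from \cite{MLE}, \cite{SDer} the precise form of the Malliavin weight $H_h$ and verify that its $L_p$-norm grows only linearly in $|x|$ — the delicate point being that the non-degeneracy of the jump noise (conditions \textbf{H}(ii)--(iv)) is what makes the inverse Malliavin covariance matrix integrable, and one needs $p<4+\beta$ so that the tails of the large jumps do not destroy the $p$-th moment. By contrast, the second estimate is entirely standard. I would therefore organize the write-up so that the second bound is dispatched in a line or two by citing a standard reference for moment estimates of Lévy-driven SDEs (e.g.\ via \textbf{A}(i) and \textbf{H}(i)), and devote the bulk of the argument to carefully quoting the representation from \cite{MLE}/\cite{SDer} and running the Hölder/BDG bookkeeping on the weight.
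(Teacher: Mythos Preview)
Your proposal is correct and matches the paper's approach: the paper's own proof is even terser, simply citing Lemma~1 of \cite{MLE} for the first inequality and invoking the standard Lyapunov-function argument (with $V(x)=|x|^p$, referring to Proposition~4.1 of \cite{ergod}) for the second. Your outline essentially unpacks those citations, and your It\^o--Gronwall/dissipativity sketch for the second bound is the Lyapunov argument in disguise.
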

\begin{proof} The first inequality is proved in Lemma 1 \cite{MLE}. One can prove the second inequality, using a standard argument based on the Lyapunov condition for the function $V(x)=|x|^p;$ e.g. Proposition 4.1 \cite{ergod}.
\end{proof}

Recall (e.g. \cite{ergod}, Section 3.2)  that one standard way to construct the invariant measure $\varkappa^{\theta_0}_{inv}$ is to take a weak limit point (as $T\to \infty$) for the family of \emph{Khas'minskii's averages}
$$
\varkappa^{\theta_0}_{T}(dy)={1\over T}\int_0^T\P_x^{\theta_0}(X_t\in dy)\, dt.
$$
Then, by the Fatou lemma, the second relation in (\ref{g_mom}) implies the following moment  bound for  $\varkappa^{\theta_0}_{inv}$.

\begin{res}\label{cor2} For  every $p\in(2,4+\beta)$,
$$
\int_{\Re}|y|^p\varkappa^{\theta_0}_{inv}(dy)<\infty.
$$
\end{res}

Everywhere below we assume conditions of Theorem \ref{mainthm2} to hold true.

\begin{lem}\label{CPT} The sequence $${1\over \sqrt{n}}
\sum_{j=1}^ng_h\left(\theta_0;X_{h(j-1)},X_{hj}\right), \quad n\geq 1$$
is asymptotically normal w.r.t. $P_x^{\theta_0}$ with parameters $(0,\sigma^2(\theta_0))$, where $\sigma^2(\theta_0)$ is defined in (\ref{sigma}).
\end{lem}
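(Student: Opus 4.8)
The statement is a central limit theorem for the additive functional $\sum_{j=1}^n g_h(\theta_0; X_{h(j-1)}, X_{hj})$ of the ergodic Markov chain $(X_{hj})_{j\ge0}$. The natural strategy is to exploit the martingale structure already built into $g_h$: by \eqref{g_mart} we have $\E_x^{\theta_0}\bigl[g_h(\theta_0; X_{h(j-1)}, X_{hj})\mid \mathcal F_{j-1}\bigr]=0$, so the partial sums form a martingale with respect to the filtration $\mathcal F_j=\sigma(X_{hi}, i\le j)$. Hence I would invoke the martingale central limit theorem (e.g.\ the triangular-array version with a conditional Lindeberg condition, as in \cite{IKh} or Hall--Heyde). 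Writing $\xi_j = n^{-1/2} g_h(\theta_0; X_{h(j-1)}, X_{hj})$, I must check two things: (i) the conditional variances converge, $\sum_{j=1}^n \E_{x,j-1}^{\theta_0}\xi_j^2 = \frac1n\sum_{j=1}^n \E_{x,j-1}^{\theta_0}\bigl(g_h(\theta_0; X_{h(j-1)}, X_{hj})\bigr)^2 \to \sigma^2(\theta_0)$ in $\P_x^{\theta_0}$-probability; and (ii) a Lindeberg/Lyapunov condition, for which the $p$-th moment bound \eqref{g_mom} with some $p\in(2,4+\beta)$ is tailor-made: $\frac1{n^{p/2}}\sum_{j=1}^n \E_x^{\theta_0}\bigl|g_h(\theta_0;X_{h(j-1)},X_{hj})\bigr|^p \le \frac{C}{n^{p/2}}\sum_{j=1}^n \E_x^{\theta_0}(1+|X_{h(j-1)}|)^p$, and by the second bound in \eqref{g_mom} together with the moment bound on $\varkappa_{inv}^{\theta_0}$ (Corollary \ref{cor2}) the sum $\sum_{j=1}^n \E_x^{\theta_0}(1+|X_{h(j-1)}|)^p$ grows at most linearly in $n$, so the whole expression is $O(n^{1-p/2})\to 0$. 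This is the Lyapunov condition of order $p>2$, which implies the conditional Lindeberg condition.

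The main obstacle is step (i), the law of large numbers for the conditional variances. Set $G(x) = \E_x^{\theta_0}\bigl(g_h(\theta_0; x, X_h)\bigr)^2 = \int_{\Re} g_h(\theta_0;x,y)^2 p_h(\theta_0;x,y)\,dy$, so that $\E_{x,j-1}^{\theta_0}\bigl(g_h(\theta_0;X_{h(j-1)},X_{hj})\bigr)^2 = G(X_{h(j-1)})$ by the Markov property, and note $\sigma^2(\theta_0) = \int_\Re G(x)\,\varkappa_{inv}^{\theta_0}(dx)$. What is needed is the ergodic theorem $\frac1n\sum_{j=1}^n G(X_{h(j-1)}) \to \int G\,d\varkappa_{inv}^{\theta_0}$ in probability. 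Since the chain $(X_{hj})$ is ergodic with unique invariant measure $\varkappa_{inv}^{\theta_0}$ (established before the theorem, via \cite{masuda}, \cite{ergod}), Birkhoff's ergodic theorem gives this for $\varkappa_{inv}^{\theta_0}$-a.e.\ starting point, and one needs to upgrade to every fixed $x$ and to $L^1$- (hence in-probability) convergence. The standard route is: first show $G$ has at most polynomial growth — from $|g_h(\theta_0;x,y)|\le$ (something controlled) this follows, but more simply by Cauchy--Schwarz or directly, $G(x) \le C(1+|x|)^{p'}$ for a suitable exponent (using \eqref{g_mom} with an exponent $>2$ and uniform integrability); then the polynomial moment bound $\sup_j \E_x^{\theta_0}|X_{hj}|^{p} < \infty$ in \eqref{g_mom} together with convergence of the transition kernels to $\varkappa_{inv}^{\theta_0}$ (and uniform integrability of $G(X_{hj})$ coming from a slightly higher moment) yields $\E_x^{\theta_0} G(X_{hj}) \to \int G\,d\varkappa_{inv}^{\theta_0}$ and, via a Cesàro argument plus a variance estimate exploiting the mixing/ergodicity, convergence of the averages in $L^1$. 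Concretely I would combine the Khas'minskii-average description of $\varkappa_{inv}^{\theta_0}$ recalled just above with the moment bounds to control $\frac1n\sum_j \E_x^{\theta_0}G(X_{h(j-1)})$, and then control the fluctuation $\frac1n\sum_j\bigl(G(X_{h(j-1)}) - \E_x^{\theta_0}G(X_{h(j-1)})\bigr)$ in $L^2$ using the decay of correlations guaranteed by ergodicity (or, more robustly, by a spectral-gap/$\beta$-mixing estimate if available from \cite{ergod}).

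Once (i) and (ii) are in hand, the martingale CLT delivers $\mathcal L\bigl(n^{-1/2}\sum_{j=1}^n g_h(\theta_0;X_{h(j-1)},X_{hj})\mid \P_x^{\theta_0}\bigr) \Rightarrow N(0,\sigma^2(\theta_0))$, which is exactly the assertion. I would present the argument in this order: (1) record the martingale-difference property from \eqref{g_mart}; (2) establish the Lyapunov condition of order $p$ using \eqref{g_mom} and the linear-in-$n$ growth of $\sum_j\E_x^{\theta_0}(1+|X_{h(j-1)}|)^p$; (3) establish the convergence of conditional variances to $\sigma^2(\theta_0)$ via the ergodic theorem for the chain together with the polynomial moment bounds and the Khas'minskii-average representation of $\varkappa_{inv}^{\theta_0}$; (4) conclude by the martingale CLT. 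If a clean quantitative mixing bound is not at our disposal, an alternative for step (3) is to replace the martingale CLT by a direct application of a CLT for ergodic stationary sequences applied to the stationary version $X^{st,\theta_0}$ and then transfer to the fixed-$x$ chain by absolute continuity of the initial laws after one step plus the contraction toward $\varkappa_{inv}^{\theta_0}$; but the martingale route above is the more self-contained one and is the one I would write up.
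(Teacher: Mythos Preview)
Your plan is sound but takes a genuinely different route from the paper. You exploit the martingale-difference structure of $g_h$ from \eqref{g_mart} and apply a martingale CLT, reducing the problem to (ii) a Lyapunov condition (which you handle correctly via \eqref{g_mom}) and (i) an LLN for the conditional variances $\tfrac1n\sum_j G(X_{h(j-1)})$. The paper instead ignores the martingale structure at the CLT stage: it applies a CLT for $\alpha$-mixing stationary sequences (Theorem 18.5.3 in \cite{IL}) to the stationary version $X^{st,\theta_0}$, obtaining a limit variance $\widetilde\sigma^2(\theta_0)=\sum_{k\in\Z}\mathrm{Cov}(\cdot)$, and then \emph{transfers} the result to the law $\P_x^{\theta_0}$ by an explicit exponential coupling estimate (Theorem 2.2 in \cite{Kulik}); only at the end is \eqref{g_mart} invoked, to kill the cross-covariances and identify $\widetilde\sigma^2=\sigma^2$. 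What your approach buys is that the variance is $\sigma^2(\theta_0)$ from the outset, with no covariance series to sum; what the paper's approach buys is that the coupling machinery, once set up, handles both the CLT transfer and the LLN (your step (i), which the paper proves immediately after the lemma by the same argument with Birkhoff--Khinchin in place of the mixing CLT). Note that your ``main obstacle'' (i) is real: the cleanest way to get $\tfrac1n\sum_j G(X_{h(j-1)})\to\sigma^2(\theta_0)$ in $\P_x^{\theta_0}$-probability for \emph{every} $x$ is precisely the stationary-plus-coupling device the paper uses, so in a full write-up you would likely end up importing that ingredient anyway. Your fallback ``alternative'' at the end is in fact the paper's proof.
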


\begin{proof}
The idea of the proof is similar to the one of the proof of  Theorem 3.3
\cite{Leonenko}.  Denote
$$Q_n(\theta_0, X)={1\over \sqrt{n}}
\sum_{j=1}^ng_h\left(\theta_0;X_{h(j-1)},X_{hj}\right).
$$
By Theorem 2.2 \cite{masuda}  (see also Theorem 1.2 \cite{ergod}), the
$\alpha$-mixing coefficient $\alpha(t)$ for the stationary version of the process $X$ does not exceed  $C_3
e^{-C_4t}$, where $C_3,C_4$ are some positive constants.
Then by CLT for stationary sequences (Theorem 18.5.3 \cite{IL}) and
\eqref{g_mom} we have
$$
    Q_n(\theta_0, X^{st,\theta_0})\Rightarrow \mathcal{N}\left(0, \widetilde{\sigma}^2(\theta_0)\right),\quad
    n\rightarrow \infty
$$
with
 $$\widetilde{\sigma}^2(\theta_0)=
\sum_{k=-\infty}^{+\infty}\E\left(
    g_h\left(\theta_0;X_0^{st, \theta_0}, X_{h}^{
    st,\theta_0}\right)
    g_h\left(\theta;X_{h(k-1)}^{ st,\theta_0}, X_{hk}^{
    st,\theta_0}\right)\right).$$
Furthermore, under conditions of Theorem \ref{mainthm2} there
exists an {\it{exponential coupling}} for the process $X$; that is, a  two-component process $Y=(Y^1, Y^2)$, possibly defined on another probability space, such that
$Y^1$ has the distribution $\P_x^{\theta_0}$, $Y^2$ has the same distribution with $X^{st, \theta_0}$, and  for all $t>0$
\begin{equation}\label{ineqcoup}
    \pr\Big(Y_{t}^1\neq Y_{t}^2\Big)\le C_1 e^{-C_2t}
\end{equation}
with some  constants $C_1$, $C_2$. The proof of this fact can be found in \cite{Kulik} (Theorem 2.2).  Then for any Lipschitz continuous function $f:\Re\to \Re$ we have
\begin{multline}\label{zbchCoup}
|\E_x^\theta f(Q_n(\theta_0, X))-\E f(Q_n(\theta_0,
X^{st,\theta_0}))|=|\E f(Q_n(\theta_0, Y^1))-\E f(Q_n(\theta_0,
Y^2))|\\=\mathrm{Lip}(f)\E|Q_n(\theta_0, Y^1)-Q_n(\theta_0, Y^2)|
\\
\leq
\frac{\mathrm{Lip}(f)}{\sqrt{n}}\sum_{k=1}^n\E\left|g_h(\theta_0;Y^1_{h(k-1)},
Y^1_{hk})-g_h(\theta_0; Y^2_{h(k-1)},
Y^2_{hk})\right|1_{(Y^1_{h(k-1)},Y^1_{hk})\neq (Y^2_{h(k-1)},Y^2_{hk})}
\\
 \leq
\frac{2\mathrm{Lip}(f)}{\sqrt{n}}\sum_{k=1}^n\left(\E
\left|g_h\left(\theta_0;Y^1_{h(k-1)},
Y^1_{hk}\right)\right|^p+\E\left|g_h\left(\theta_0;Y^2_{h(k-1)},
Y^2_{hk}\right)\right|^p\right)^{1/p}\\ \times\left(P\Big(Y^1_{h(k-1)}\neq
Y^2_{h(k-1)}\Big)+P\Big(Y^1_{hk}\neq
Y^2_{hk}\Big)\right)^{1/q},
\end{multline}
where $p,q>1$ are such that $1/p+1/q=1$. Since $Y^1$ has the distribution $\P_x^{\theta_0}$, by \eqref{g_mom} we have for $p\in n(2, 4+\beta)$
\begin{multline}\E
\left|g_h\left(\theta_0;Y^1_{h(k-1)},
Y^1_{hk}\right)\right|^p=\E_x^{\theta_0}
\left|g_h\left(\theta_0;X_{h(k-1)},
X_{hk}\right)\right|^p\\\leq C\E_x^{\theta_0}\Big(1+|X_{h(k-1)}|^p)\Big)\leq C+C^2(1+|x|^p).
\end{multline}
Similarly,
\begin{multline}\E
\left|g_h\left(\theta_0;Y^2_{h(k-1)},
Y^2_{hk}\right)\right|^p=\E
\left|g_h\left(\theta_0;X_{h(k-1)}^{st, \theta_0},
X_{hk}^{st, \theta_0}\right)\right|^p\\\leq C\E\Big(1+\Big|X_{h(k-1)}^{st, \theta_0}\Big|^p)\Big)=C+C\int_{\Re}|y|^p\varkappa^{\theta_0}_{inv}(dy),
\end{multline}
and the constant in the right hand side is finite by Corollary \ref{cor2}. Hence  \eqref{ineqcoup} and (\ref{zbchCoup}) yield that
$$
\E_x^\theta f(Q_n(\theta_0, X))\to \E f(\xi),\quad n\to \infty, \quad \xi\sim \mathcal{N}\left(0, \widetilde{\sigma}^2(\theta_0)\right)
$$
for every Lipschitz continuous function $f:\Re\to \Re$. This means that  the sequence $Q_n(\theta_0, X), n\geq 1$
is asymptotically normal w.r.t. $P_x^{\theta_0}$ with parameters $(0,\tilde \sigma^2(\theta_0))$.

To conclude the proof, it remains to show that
$\widetilde{\sigma}^2(\theta_0)={\sigma}^2(\theta_0)$. This follows easily from (\ref{g_mart}) because, by the Markov property of $X^{st,\theta_0}$,  \begin{multline*}
\widetilde{\sigma}^2(\theta_0)=
 \sigma^2(\theta_0)+ 2\sum_{k=1
}^\infty \E\left( g_h\left(\theta_0;X_0^{st, \theta_0}, X_{h}^{
    st,\theta_0}\right)
    g_h\left(\theta;X_{h(k-1)}^{ st,\theta_0}, X_{hk}^{
    st,\theta_0}\right)\right)\\=
\sigma^2(\theta_0)+ 2\sum_{k=1
}^\infty\E\left[g_h(\theta;X_{0}^{st, \theta_0}, X_{h}^{
    st, \theta_0})\Big(\E_x^\theta g_h(\theta_0;x, X_h)\Big)_{x=X_{h(k-1)}^{st, \theta_0}}\right].
\end{multline*} \end{proof}

Similarly, one can prove that $${1\over {n}}
\sum_{j=1}^n\Big(g_h\left(\theta_0;X_{h(j-1)},X_{hj}\right)\Big)^2\to \sigma^2(\theta_0), \quad n\to \infty$$
in $L_1(\pr_x^{\theta_0})$; the argument is completely the same, with the CLT for a stationary sequence replaced by the
Birkhoff-Khinchin ergodic theorem (we omit the details). Hence
\be\label{r_as}
I_n(\theta_0)\sim n\sigma^2(\theta_0), \quad r(n)\sim {1\over \sqrt{n}\sigma(\theta_0)}, \quad n\to \infty.
\ee
Therefore conditions 2 -- 4 of Theorem \ref{mainthm1} are verified. Condition 1 of Theorem \ref{mainthm1} also holds true: regularity property is proved in \cite{MLE}, and positivity of $I_n(\theta)$ follows from (\ref{r_as}).

Let us prove \eqref{loc1th1}, which then would  allow us to apply Theorem \ref{mainthm1}. It is proved in \cite{SDer} that,
under the conditions of Theorem \ref{mainthm2}, the function $q_h(\theta,x,y)$ is $L_2$-differentiable w.r.t. $\theta$, and
$$
\prt_\theta q_h=\frac{1}{2}(\prt_\theta g_h)\sqrt{p_h}+\frac{1}{4}(g_h)^2 \sqrt{p_h}.
$$
In addition, it is proved therein that for every $\gamma\in[1,2+\beta/2)$
 \be\label{dgmom} \E_x^\theta \Big|\prt_\theta
g_h(\theta;x, X_h)\Big|^\gamma\leq C(1+|x|)^\gamma. \ee

Then
$$\begin{aligned}
&\E_x^{\theta}\int_{\R}\left(q_h\left(\theta+ r(n)v,X_{h(j-1)},y\right)-
q_h(\theta,X_{h(j-1)},y)\right)^2 \df y\\&\hspace*{2cm}\le
 r(n)v\E_x^{\theta}\int_{\R}\df
y\int_{0}^{ r(n)v}\left(\prt_\theta
q_h\left(\theta+s,X_{h(j-1)},y\right)\right)^2\df s\\&\hspace*{2cm}\le
\frac{ r(n)v}{4}\E_x^{\theta}\int_{0}^{r(n)v}\df
s\int_{\R}\left(\prt_\theta
g_h\left(\theta+s;X_{h(j-1)},y\right)+\frac{1}{2}g_h\left(\theta+s;X_{h(j-1)},y\right)^2\right)^2\\&\hspace*{7cm}\times
p_h^s(X_{h(j-1)},y)\df y\\&\hspace*{2cm}\le C r(n)^2v^2
\E_x^{\theta}\left(1+(X_{h(j-1)})^{4}\right);
\end{aligned}
$$
in the last inequality we have used \eqref{dgmom} and the first relation  in \eqref{g_mom}.
Using the second relation in \eqref{g_mom}, we get then
$$
\sup_{|v|<N}  r(n)^2\E_x^{\theta} \sum_{j=1}^n\E_x^{\theta}\int_{\R}\left(q_h\left(\theta+ r(n)v,X_{h(j-1)},y\right)-
q_h(\theta,X_{h(j-1)},y)\right)^2 \df y\le CN^2n r(n)^{4}
$$
with a constant $C$ that depends only on $x$. This relation
together with (\ref{r_as}) completes the proof.

\section*{Acknowledgements} The authors are deeply grateful to H. Masuda for a valuable bibliographic help and useful discussion.

\end{document}